\numberwithin{equation}{section}
\definecolor{darkblue}{rgb}{0,0,0.5}
\newdimen\margin
\def\textno#1&#2\par{
   \margin=\hsize
   \advance\margin by -4\parindent
          \setbox1=\hbox{\sl#1}
   \ifdim\wd1 < \margin
      $$\box1\eqno#2$$
   \else
      \bigbreak
      \hbox to \hsize{\indent$\vcenter{\advance\hsize by -3\parindent
      \it\noindent#1}\hfil#2$}
      \bigbreak
   \fi}
\newtheorem{theorem}[algorithm]{Theorem}
\newtheorem{prop}[algorithm]{Proposition}
\newtheorem{lemma}[algorithm]{Lemma}
\theoremstyle{definition}
\def\lateproof#1{\removelastskip\penalty55\medskip\noindent\begin{stepenv}\end{stepenv}{\bf Proof of #1. }} % in each main proof, claim and step counter set back
\def\noproof{{\unskip\nobreak\hfill\penalty50\hskip2em\hbox{}\nobreak\hfill%
       $\square$\parfillskip=0pt\finalhyphendemerits=0\par}\goodbreak}
\def\endproof{\noproof\bigskip}
\newcounter{stepenv}
\newenvironment{stepenv}[1][]{\refstepcounter{stepenv}}{}
\newcounter{step}[stepenv]
\newcounter{substep}[step]
\renewcommand{\thesubstep}{\thestep.\arabic{substep}}
\newcounter{claim}[stepenv]
\newcommand{\cE}{\mathcal{E}}
\newcommand{\cF}{\mathcal{F}}
\newcommand{\cP}{\mathcal{P}}
\newcommand{\bN}{\mathbb{N}}
\newcommand{\bR}{\mathbb{R}}
\def\eps{{\epsilon}}
\newcommand{\eul}{{e}}
\newcommand{\defn}{\emph}
\newcommand{\prob}[1]{\mathrm{\mathbb{P}}\left[#1\right]}
\newcommand{\cprob}[2]{\prob{#1 \;\middle|\; #2}}
\newcommand{\expn}[1]{\mathrm{\mathbb{E}}\left[#1\right]}
\def\sm{\setminus}
\newcommand{\Set}[1]{\{#1\}}
\newcommand{\set}[2]{\{#1\,:\;#2\}}
\def\In{\subset}
\newcommand{\IND}{\mathbbm{1}}
\def\COMMENT#1{}
\def\TASK#1{}
\let\TASK=\footnote             % COMMENT OUT for clean output
\begin{document}

\title{Note on induced paths in sparse random graphs}

\author{Stefan Glock \thanks{Institute for Theoretical Studies, ETH, 8092 Z\"urich, Switzerland.
Email: \href{mailto:dr.stefan.glock@gmail.com}{\nolinkurl{dr.stefan.glock@gmail.com}}. Research supported by Dr.~Max R\"ossler, the Walter Haefner Foundation and the ETH Z\"urich Foundation.}
}

\date{}

\maketitle

\begin{abstract} 
We show that for $d\ge d_0(\eps)$, with high probability, the random graph $G(n,d/n)$ contains an induced path of length $(3/2-\eps)\frac{n}{d}\log d$. This improves a result obtained independently by \L{}uczak and Suen in the early 90s, and answers a question of Fernandez de la Vega. 
Along the way, we generalize a recent result of Cooley, Dragani\'c, Kang and Sudakov who studied the analogous problem for induced matchings.
\end{abstract}

\section{Introduction}

Let $G(n,p)$ denote the binomial random graph on $n$ vertices, where each edge is included independently with probability~$p$. In this note, we are concerned with \emph{induced} subgraphs of $G(n,p)$, specifically trees and paths.

The study of induced trees in $G(n,p)$ was initiated by Erd\H{o}s and Palka~\cite{EP:83} in the 80s. Among other things, they showed that for constant $p$, with high probability (\textbf{whp}) the size of a largest induced tree in $G(n,p)$ is asymptotically equal to $2\log_q(np)$, where $q=\frac{1}{1-p}$. The obtained value coincides asymptotically with the \emph{independence number} of $G(n,p)$, the study of which dates back even further to the work of Bollob\'as and Erd\H{o}s~\cite{BE:76}, Grimmett and McDiarmid~\cite{GM:75} and Matula~\cite{matula:76}.

As a natural continuation of their work, Erd\H{o}s and Palka~\cite{EP:83} posed the problem of determining the size of a largest induced tree in \emph{sparse} random graphs, when $p=d/n$ for some fixed constant~$d$. More precisely, they conjectured that for every $d>1$ there exists $c(d)>0$ such that \textbf{whp} $G(n,p)$ contains an induced tree of order at least $c(d)\cdot n$.
This problem was settled independently in the late 80s by Fernandez de la Vega~\cite{fernandez-de-la-Vega:86}, Frieze and Jackson~\cite{FJ:87a}, Ku\v{c}era and R\"{o}dl~\cite{KR:87} as well as \L{}uczak and Palka~\cite{LP:88}.
%\COMMENT{in~\cite{LP:88}, de la Vega and Frieze--Jackson are already cited, and there is no claim that it is independent, but a new proof. But Luczak himself in his later paper lists all 4 papers as independent, and so do other later papers}
In particular, Fernandez de la Vega~\cite{fernandez-de-la-Vega:86} showed that one can take $c(d)\sim \frac{\log d}{d}$, and a simple first moment calculation reveals that this is tight within a factor of~$2$.

Two natural questions arise from there. First, one might wonder whether it is possible to find not only some  \emph{arbitrary} induced tree, but a \emph{specific} one, say a long induced path. Indeed, Frieze and Jackson~\cite{FJ:87b} in a separate paper showed that \textbf{whp} there is an induced path of length $\tilde{c}(d)\cdot n$. Two weaknesses of this result were that their proof only worked for sufficiently large~$d$, and that the value obtained for $\tilde{c}(d)$ was far away from the optimal one.
Later, \L{}uczak~\cite{luczak:93} and Suen~\cite{suen:92} independently remedied this situation twofold. They proved that an induced path of length linear in $n$ exists for all $d>1$, showing that the conjecture of Erd\H{o}s and Palka holds even for induced paths. Moreover, they showed that one can take $\tilde{c}(d)\sim \frac{\log d}{d}$ as in the case of arbitrary trees. 

A second obvious question is to determine the size of a largest induced tree (and path) more precisely. The aforementioned results were proved by analysing the behaviour of certain constructive algorithms which produce large induced trees and paths. The value $\frac{\log d}{d}$ seems to constitute a natural barrier for such approaches. On the other hand, recall that in the dense case, the size of a largest induced tree coincides asymptotically with the independence number. In 1990, Frieze~\cite{frieze:90} showed that the first moment bound $\sim2\frac{n}{d}\log d$ is tight for the independence number, also in the sparse case. His proof is based on the profound observation that the second moment method can be used even in situations where it apparently does not work, if one can combine it with a strong concentration inequality.
Finally, in 1996, Fernandez de la Vega~\cite{fernandez-de-la-Vega:96} observed that the earlier achievements around induced trees can be combined with Frieze's breakthrough to prove that the size of a largest induced tree is indeed $\sim 2\frac{n}{d}\log d$. This complements the result of Erd\H{o}s and Palka~\cite{EP:83} in the dense case. (When $p=o_n(1)$, we have $2\log_q(np)\sim 2\frac{n}{d}\log d$.)

Fernandez de la Vega~\cite{fernandez-de-la-Vega:96} also posed the natural problem of improving the \L{}uczak--Suen bound~\cite{luczak:93,suen:92} for induced paths, for which his approach was ``apparently helpless''. Despite the widely held belief (see~\cite{CDKS:ta,DS:18} for instance) that the upper bound $\sim 2\frac{n}{d}\log d$ obtained via the first moment method is tight, the implicit constant $1$ has not been improved in the last 30 years.

\subsection{Long induced paths}\label{sec:paths}

Our main result is the following, which solves the problem of Fernandez de la Vega. Unfortunately, we only get ``halfway'' towards the optimal bound, and the obvious problem left open is to close the remaining gap.

\begin{theorem}\label{thm:path}
For any $\eps>0$ there is $d_0$ such that \textbf{whp} $G(n,p)$ contains an induced path of length $(3/2-\eps)\frac{n}{d}\log d$ whenever $d_0\le d=pn =o(n)$.
\end{theorem}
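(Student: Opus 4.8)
The plan is to avoid the greedy strategy behind the \L{}uczak--Suen bound --- which stalls at $\sim\frac nd\log d$ because each vertex appended to the current path must avoid essentially all $\sim\frac nd\log d$ of its vertices, and such a vertex has already become scarce among the $\sim d$ candidates --- and instead argue globally, in the spirit of Frieze's proof that the independence number of $G(n,d/n)$ attains the first moment bound and of the Cooley--Dragani\'c--Kang--Sudakov argument for induced matchings. A first moment calculation shows $G(n,d/n)$ has \textbf{whp} no induced path of length $(2+\eps)\frac nd\log d$, and $\frac32\frac nd\log d$ is exactly the number of edges carried by the first-moment-optimal induced \emph{disjoint union of $3$-edge paths}: such a forest has $\sim\frac12\frac nd\log d$ components, spans $\sim 2\frac nd\log d$ vertices --- as many as a maximum independent set --- and hence has $\sim\frac32\frac nd\log d$ edges. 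So the two ingredients are (i) finding such a forest, and (ii) splicing it into a single path.

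For (i) I would establish the following generalization of the Cooley--Dragani\'c--Kang--Sudakov theorem: for every fixed $\ell$ and $\eps>0$ there is $d_0$ so that if $d_0\le d=o(n)$ then \textbf{whp} $G(n,d/n)$ contains an induced linear forest, each component a path with $\ell$ edges, having at least $(1-\eps)\frac{2\ell}{\ell+1}\cdot\frac nd\log d$ edges in total; for $\ell=3$ this is $(1-\eps)\frac32\frac nd\log d$. Writing $Y_\ell=Y_\ell(G)$ for the maximum number of edges in such a forest, $Y_\ell$ changes by at most $O(\ell)$ when a vertex is deleted or a pair of vertices is toggled, so by Azuma's inequality it is concentrated within $O_\ell(\sqrt{n\log n})$ of its mean --- negligible against $\frac nd\log d$. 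Hence it is enough to show $\mathbb{P}[Y_\ell\ge s]$ is at least an inverse polynomial when $s=(1-\eps)\frac{2\ell}{\ell+1}\frac nd\log d$, and for this I would run the second moment method on the number $X$ of induced $P_\ell$-forests with exactly $s$ edges. As in the matching case, $\mathbb{E}[X^2]/(\mathbb{E}X)^2$ need not be $1+o(1)$ but is at most $n^{O(1)}$: the dominant off-diagonal contribution comes from pairs of forests sharing a single edge and is $O(s^2/(nd))\cdot(\mathbb{E}X)^2$, with overlaps along longer sub-paths contributing strictly less for every $s<2\frac nd\log d$, so Paley--Zygmund gives $\mathbb{P}[Y_\ell\ge s]\ge \mathbb{P}[X\ge1]\ge n^{-O(1)}$. (When $d$ is a power of $n$ this ratio is already $1+o(1)$ and one does not even need the concentration step.)

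The hard part, and presumably the reason one gets $3/2$ rather than the first-moment value $2$, is step (ii): turning the forest into a single \emph{induced} path of essentially the same length. Bridging consecutive components by fresh vertices fails near the optimal density, since a bridging vertex would need to avoid essentially all $\sim 2\frac nd\log d$ vertices of the forest --- probability $d^{-\Theta(1)}$, too small against its $\sim d$ candidate neighbours --- and longer bridges only worsen this. One must therefore build in slack: work with a forest spanning somewhat fewer vertices (for instance by running (i) on a subset of $V(G)$), form an auxiliary graph on its components in which an edge records that some endpoint of one can be joined to some endpoint of the other by a valid \emph{chord-free} bridge, and extract from it a path along which the concatenation of components and bridges has no chords --- no edge between non-consecutive components, exactly one bridge per link, pairwise non-adjacent bridges. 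The resulting balance between the component length $\ell$, the number of components one can chain, and the density of the auxiliary graph is what pins the constant down to $3/2$; shrinking this loss to $2-\eps$ is the problem left open. Finally one checks that the whole argument goes through uniformly for $d_0\le d=o(n)$, the bounded-$d$ regime (where concentration does the work) and the polynomially-large-$d$ regime (where the second moment nearly suffices) being covered by the same estimates.
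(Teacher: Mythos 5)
Your two-stage plan (find a suitable induced linear forest, then splice its components with fresh vertices) is exactly the paper's strategy, and your step (i) is essentially the paper's Theorem~\ref{thm:forests}. However, step (ii) as described would not reach $3/2$; it would plateau at the \L{}uczak--Suen value $\sim\frac nd\log d$, and the reason is the fixed component length.

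Here is the quantitative obstruction you are glossing over. Say the forest $F$ you keep has order $|F|=c\,\frac nd\log d$ and components of length $L$, so it has $N\approx|F|/L$ components. A fresh vertex $a$ bridging two given components must see exactly one endpoint-segment of each (size $\approx\eps L$), giving factor $\approx(\eps Lp)^2$, and must avoid the rest of $F$, giving factor $\approx(1-p)^{|F|}\approx d^{-c}$. After the DFS-type bookkeeping (for any two $\eps N$-subsets of components there must be a usable bridge beating a union bound over $\exp(\Theta(N\log d))$ choices), the requirement boils down to
$$L\,d^{1-c}\ \gtrsim\ \text{polylog}(d),$$
i.e.\ $L\gtrsim d^{c-1}$. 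With $L=O(1)$ (your fixed $\ell$), this forces $c\le 1+o_d(1)$: you could only splice a forest of order $\sim\frac nd\log d$, and since the final path has length $\approx|F|$ (not $\approx e(F)$), that is all you get. The paper obtains $c\to 3/2$ precisely because it proves a strengthening of your (i) --- Lemma~\ref{lem:forests} --- where the component paths have order $L\approx d^{1/2}/\log^4 d$. This comes essentially for free, by noticing that the Talagrand/Azuma concentration step has enough room to tolerate a Lipschitz constant that grows slowly with $d$; that is the one additional idea you are missing, and it is load-bearing.

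Relatedly, the observation that a $P_3$-forest on $\sim 2\frac nd\log d$ vertices carries $\sim\frac32\frac nd\log d$ edges is a numerical coincidence, not the source of the constant. The paper's spliced forest spans only $\sim\frac32\frac nd\log d$ vertices (deliberately fewer than Lemma~\ref{lem:forests} would allow, because $p|F|$ governs how rare the bridges are), and the final induced path has length essentially equal to $|F|$; the constant $3/2$ is $1+\alpha$ where $L\approx d^\alpha$ with $\alpha=1/2$. Your second-moment and concentration sketches for step (i) are in the right spirit (the paper likewise combines a Paley--Zygmund lower bound for labelled copies with a vertex-exposure concentration inequality), though the claim that $\bE[X^2]/(\bE X)^2\le n^{O(1)}$ uniformly in $d$ is optimistic for bounded $d$; what one actually shows is a bound of the form $\exp(O(n\log^2 d/d^2))$, which is what the matching Talagrand tail can absorb.
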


For the sake of generality, we state our result for a wide range of functions~$d=d(n)$. 
However, we remark that the most interesting case is when $d$ is a sufficiently large constant, and any improvement in this regime is likely to generalize straightforwardly. 
In fact, for dense graphs, when $d\ge n^{1/2}\log^2 n$, much better results are already known (cf.~\cite{DS:18,rucinski:87}).

Some of the earlier results~\cite{FJ:87b,luczak:93} are phrased in terms of induced cycles (\defn{holes}). Using a simple sprinkling argument, one can see that aiming for a cycle instead of a path does not make the problem any harder.

We also note that our proof is self-contained, except for well-known facts from probability and graph theory.

We briefly explain our strategy, and also discuss how the approach might be used to eventually match the upper bound.
 The idea is to find a long induced path in two steps. First, we find many disjoint paths of some chosen length $L$, such that the subgraph consisting of their union is induced. To achieve this, we generalize a recent result of Cooley, Dragani\'c, Kang and Sudakov~\cite{CDKS:ta} who obtained large induced matchings. We will discuss this further in Section~\ref{sec:forests}.
Assuming now we can find such an induced linear forest $F$, the aim is to connect almost all of the small paths into one long induced path, using a few additional vertices. (In order to maintain randomness for the connection step, we only expose half the vertices to find~$F$.)
To model this, we give each path in $F$ a direction, and define an auxiliary digraph whose vertices are the paths, and two paths $(P_1,P_2)$ form an edge if there exists a new ``connecting'' vertex $a$ that has some edge to the last $\eps L$ vertices of $P_1$ and some edge to the first $\eps L$ vertices of $P_2$, but no edge to the rest of~$F$.
Our goal is to find an almost spanning path in this auxiliary digraph. Observe that this will provide us with a path in $G(n,p)$ of length roughly~$|F|$. Moreover, if we can ensure that the new connecting vertices form an independent set, this path will be induced.
The intuition is that the auxiliary digraph behaves quite randomly, which gives us hope that, even though it is very sparse, we can find an almost spanning path. In order to back this up and illustrate the interplay between the above parameters, let us assume that in the first step we can find an induced linear forest $F$ of order $c\frac{n}{d}\log d=cp^{-1}\log d$ with components of order $L\approx d^{\alpha}$, where $c$ and $\alpha$ are constants to be specified later.
Consider now two paths $P_1,P_2$ in~$F$. For a new vertex $a$, the probability that it joins to the desired segments of $P_1,P_2$ is $\approx (Lp)^2$, and the probability that it has no edge to the rest of $F$ is at least $(1-p)^{|F|}\approx \exp(-p|F|) = d^{-c}$. Since there are $\Theta(n)$ potential connecting vertices, we estimate the probability that $(P_1,P_2)$ is an edge in the auxiliary digraph to $\approx L^2p^2d^{-c}n$. Noting that the order of our digraph is $N= c\frac{n}{dL}\log d$, we infer that its average degree is $\approx d^{1-c}L\log d$. 
It is well-known in random graph theory that a random $N$-vertex digraph contains a path of length $(1-\eps)N$ if the average degree is sufficiently large as a function of~$\eps$.
This suggests that our strategy could work if $c\le 1+\alpha$. Indeed, it turns out that we can ensure $L\approx d^{1/2}$ in the first step of our argument (see Lemma~\ref{lem:forests}), hence the constant $3/2$ in Theorem~\ref{thm:path}. 
If one could ensure that $L$ is almost linear in~$d$, that is, $\alpha\approx 1$, then our argument would directly yield an induced path of the asymptotically optimal length $\sim 2\frac{n}{d}\log d$. The proof of the connecting step will be given in Section~\ref{sec:connect}.

\subsection{Induced forests with small components}\label{sec:forests}

As outlined above, in the first step of our argument, we seek an induced linear forest whose components are paths of length $L\approx d^{1/2}$.
For this, we generalize a recent result of Cooley, Dragani\'c, Kang and Sudakov~\cite{CDKS:ta}. They proved that \textbf{whp} $G(n,p)$ contains an induced matching with $\sim 2\log_q (np)$ vertices, which is asymptotically best possible. They also anticipated that using a similar approach one can probably obtain induced forests with larger, but bounded components. As a by-product, we confirm this.
To state our result, we need the following definition. For a given graph $T$, a \defn{$T$-matching} is a graph whose components are all isomorphic to~$T$. Hence, a $K_2$-matching is simply a matching, and the following for $T=K_2$ implies the main result of~\cite{CDKS:ta}. 

\begin{theorem}\label{thm:forests}
	For any $\eps>0$ and tree $T$, there exists $d_0>0$ such that \textbf{whp} the order of the largest induced $T$-matching in $G(n,p)$ is $(2\pm \eps)\log_q(np)$, where $q=\frac{1}{1-p}$, whenever $\frac{d_0}{n}\le p\le 0.99$.
\end{theorem}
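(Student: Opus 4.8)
The upper bound is a routine first-moment calculation: for a fixed tree $T$ on $t$ vertices, the expected number of induced $T$-matchings with $k$ components is roughly $\binom{n}{tk}\cdot(tk)!/(\text{aut})\cdot p^{(t-1)k}(1-p)^{\binom{tk}{2}-(t-1)k}$, and a standard computation shows this tends to $0$ once $tk\ge(2+\eps)\log_q(np)$; note the edge density inside a $T$-matching on $m$ vertices is $o(m)$, so the non-edges dominate exactly as in the independence-number bound, and the factor $t$ gets absorbed. The substance is the lower bound: \textbf{whp} there is an induced $T$-matching of order $(2-\eps)\log_q(np)$. I would follow the Cooley--Dragani\'c--Kang--Sudakov strategy for induced matchings, adapted to copies of $T$.

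\medskip
\textbf{Lower bound — the second moment with concentration.} Let $N=N(G)$ denote the maximum number of components in an induced $T$-matching in $G(n,p)$, and let $X=X_K$ count the induced $T$-matchings with exactly $K$ components for $K$ just below $\frac{2}{t}\log_q(np)$. A direct second moment on $X$ fails (the variance is too large, driven by pairs of $T$-matchings sharing almost all their vertices), so instead I would run the Frieze-type argument used in \cite{CDKS:ta}: partition (or randomly split) the vertex set into $m$ blocks of size $n/m$ with $m$ a large constant, restrict attention to $T$-matchings that place exactly one copy of $T$ in a prescribed set of blocks, and show the resulting count $Y$ has $\bE Y\to\infty$ and $\var Y=o((\bE Y)^2)$ — here the block structure kills the dangerous overlap terms because two copies in the same block overlap in $O(1)$ blocks' worth of choices. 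This gives that \textbf{whp} one finds an induced $T$-matching with $(1-\eps')\cdot\frac{2}{t}\log_q(np)$ components inside most block-tuples; a union bound / concentration step (Talagrand's or the bounded-differences inequality applied to the block-exposure martingale, exactly as Frieze does for $\alpha(G)$) upgrades ``most'' to ``whp for a single configuration,'' yielding $N\ge(1-\eps)\frac{2}{t}\log_q(np)$ and hence order $\ge(2-\eps)\log_q(np)$.

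\medskip
\textbf{The main obstacle.} The genuinely delicate part is the variance estimate. For induced matchings one has to control, for each $0\le j\le K$, the number of pairs of $T$-matchings sharing exactly $j$ copies, and weigh the gain from shared edges against the loss from the enlarged forbidden non-edge set; for general $T$ the bookkeeping is heavier because two copies of $T$ can also intersect \emph{partially} (sharing some but not all of their $t$ vertices), which does not happen for $K_2$ only in the trivial sense. The block decomposition is what tames this: within a block there is at most one copy, so partial intersections only occur across the boundary and contribute lower-order terms. One must check that $t$ being a constant means all the $T$-dependent constants (number of labelled copies of $T$, its automorphisms, the $t$ choices of where a shared vertex sits) are absorbed into the $\eps$ slack, which is fine since $\log_q(np)\to\infty$. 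Modulo carrying the CDKS argument through with ``$K_2$'' replaced by ``$T$'' and an extra case in the overlap analysis, the proof goes through; I would present it by first doing the clean first-moment upper bound, then setting up the block partition, then the first/second moment on the block-restricted count, and finally the concentration lemma.
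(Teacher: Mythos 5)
Your upper bound is fine and matches the paper's. The lower bound proposal has a genuine gap, centred on the block decomposition and the claim that it yields $\var Y=o((\bE Y)^2)$.

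First, the blocks don't fit dimensionally. You propose a constant number $m$ of blocks with exactly one copy of $T$ per prescribed block, but a $T$-matching of the target order $(2-\eps)\log_q(np)$ has $K\sim \frac{2}{t}\log_q(np)=\omega_n(1)$ components in the sparse regime, so you cannot house one copy per block with $m=O(1)$. More importantly, even if you took $m\sim K$ blocks, the dominant contribution to the second moment in the sparse regime comes from pairs of $T$-matchings that share almost all their copies, and the block structure does nothing to suppress these: two $T$-matchings that each place one copy in every block can still coincide on nearly all blocks. This is exactly the ``dangerous overlap'' you yourself flagged; blocks control small partial overlaps, not large ones. Consequently, the assertion that one can arrange $\var Y=o((\bE Y)^2)$ is false here; if it were true, the Frieze machinery would be unnecessary. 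The whole point of the Frieze approach (which the paper follows) is that the Paley--Zygmund bound gives only $\prob{Y>0}\ge \rho$ with $\rho=\exp(-\Theta(n\log^2 d/d^2))=o(1)$, and one then \emph{amplifies} this weak bound using concentration.

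This also means your description of the role of Talagrand is off: it is not a ``union bound / upgrade from most to whp.'' The paper's mechanism is: (i) establish $\prob{X\ge b}\ge\rho$ by a direct second moment on the number of \emph{labelled} induced copies of a single fixed forest $F$ of the target order (Lemma~\ref{lem:2ndmoment}), where the overlap sum $\sum_\sigma\cprob{A_\sigma}{A_{\sigma_0}}/\expn{Y}$ is controlled by counting, for each $(s,c)$, the compatible injections whose intersection graph has $s$ vertices and $c$ components (Proposition~\ref{prop:counting extension}); (ii) observe that the maximum order $X$ of an induced $T$-matching is $|T|$-Lipschitz and $f$-certifiable with $f(s)=s+|T|$, so Talagrand gives $\prob{X\le b-t|T|\sqrt{f(b)}}\prob{X\ge b}\le\exp(-t^2/4)$; (iii) choose $t$ so that $\exp(-t^2/4)\ll\rho$, which forces $\prob{X\le b-t|T|\sqrt{f(b)}}=o(1)$, and check that the shift $t|T|\sqrt{f(b)}$ is small enough to remain within the $\eps$-slack. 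Nothing in this argument requires a block partition, and trying to force the variance to be genuinely small via blocks is a dead end in this density regime. You should replace the block-based second moment with the Frieze bootstrap as above; the rest of your outline (Lipschitzness, certifiability, absorbing the $T$-dependent constants into $\eps$) then goes through.
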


We use the same approach as in~\cite{CDKS:ta}, which goes back to the work of Frieze~\cite{frieze:90} (see also~\cite{bollobas:88,SS:87}).
The basic idea is as follows. Suppose we have a random variable $X$ and want to show that \textbf{whp}, $X\ge b-t$, where $b$ is some ``target'' value and $t$ a small error. 
For many natural variables, we know that $X$ is ``concentrated'', say $\prob{|X- \expn{X}| \ge  t/2} < \rho$ for some small~$\rho$. This is the case for instance when $X$ is determined by many independent random choices, each of which has a small effect.
However, it might be difficult to estimate $\expn{X}$ well enough.
But if we know in addition that $\prob{X\ge b}\ge \rho$, then we can combine both estimates to $\prob{X\ge b}> \prob{X\ge \expn{X}+t/2}$, which clearly implies that $b\le \expn{X}+t/2$. Applying now the other side of the concentration inequality, we infer $\prob{X\le b-t} \le \prob{X\le \expn{X}-t/2}< \rho$, as desired.

In our case, say $X$ is the maximum order of an induced $T$-matching in~$G(n,p)$. Since adding or deleting edges at any one vertex can create or destroy at most one component, we know that $X$ is $|T|$-Lipschitz and hence concentrated (see Section~\ref{sec:concentration}). Using the above approach, it remains to complement this with a lower bound on the probability that $X\ge b$. Introduce a new random variable $Y$ which is the \defn{number} of induced $T$-matchings of order~$b$ (a multiple of $|T|$). Then we have $X\ge b$ if and only if $Y>0$. The main technical work is to obtain a lower bound for the probability of the latter event using the second moment method.
We note that by applying the second moment method to labelled copies (instead of unlabelled copies as in~\cite{CDKS:ta}) we obtain a shorter proof even in the case of matchings (see Section~\ref{sec:2ndmoment}).
More crucially, it turns out that one can even find induced forests where the component sizes can grow as a function of~$d$. As discussed above, for the proof of Theorem~\ref{thm:path} we need an induced linear forest where the components are paths of length roughly $d^{1/2}$. This is provided by the following auxiliary result. We note that the same holds for forests with arbitrary components of bounded degree, and one can also let the degree slowly grow with~$d$, but we choose to keep the presentation simple. 
%Note that this only makes sense if $d\ll n^{2/3}\log n$, which is why we state it separately from Theorem~\ref{thm:forests}

\begin{lemma}\label{lem:forests}
	For any $\eps>0$, there exists $d_0>0$ such that \textbf{whp} $G(n,p)$ contains an induced linear forest of order at least $(2-\eps)p^{-1}\log(np)$ and component paths of order $d^{1/2}/\log^4 d$, whenever $d_0\le d=np \le n^{1/2}\log^2 n$. 
\end{lemma}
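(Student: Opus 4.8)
The plan is to deduce Lemma~\ref{lem:forests} from Theorem~\ref{thm:forests} by taking $T$ to be a path, except that Theorem~\ref{thm:forests} as stated only applies to a \emph{fixed} tree, whereas here we need the component length $L := \lceil d^{1/2}/\log^4 d\rceil$ to grow with $d$ (equivalently, with $n$). So what I would really do is revisit the proof of Theorem~\ref{thm:forests} and track the dependence on $|T|$. As the remark after Lemma~\ref{lem:forests} indicates, the argument there goes through for forests with components of bounded (or slowly growing) degree, and a path has maximum degree $2$; the only thing that changes is that the target order $b$, which is a multiple of $|T|=L$, must now be $\lfloor (2-\eps)p^{-1}\log(np)/L\rfloor \cdot L$. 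The key point is that the first- and second-moment computations underlying Theorem~\ref{thm:forests} have error terms that are polynomial in $|T|$ and in $1/\eps$, and since $L \le d^{1/2} \le (np)^{1/2}$ while $p^{-1}\log(np)$ is the dominant scale, there is enough room: the relative error incurred by letting $L$ grow like $d^{1/2}/\log^4 d$ is still $o(1)$, so the same conclusion $X \ge b$ with probability at least $\rho$ holds, and then the Lipschitz concentration argument (with Lipschitz constant $|T|=L$, which is absorbed since $L = o(p^{-1}\log(np))$) finishes it.

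Concretely, the steps are as follows. First, set $L = \lceil d^{1/2}/\log^4 d\rceil$ and $b = L\lfloor (2-\eps/2)p^{-1}\log(np)/L\rfloor$, so $b \ge (2-\eps)p^{-1}\log(np)$ for $d_0$ large, and $b$ is a multiple of $L$. Let $X$ be the maximum order of an induced $P_L$-matching in $G(n,p)$ and $Y$ the number of (labelled) induced $P_L$-matchings of order exactly $b$. Second, compute $\expn{Y}$: there are $\binom{n}{b}$ ways to choose the vertex set, $\frac{b!}{(L!)^{b/L}(b/L)!}\cdot((L!/2)\text{-ish})$ ways (up to constants absorbed in the polynomial error) to partition and orient into $b/L$ paths, each internal path-edge present with probability $p$ and each of the $\binom{b}{2}-(b-b/L)$ non-edges absent with probability $1-p$; the induced condition thus contributes $(1-p)^{\binom{b}{2}}p^{b-b/L}(1-p)^{-(b-b/L)}$ up to the path-counting factor. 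Taking logarithms, the $\binom{b}{2}\log(1-p)$ term contributes $-\tfrac12 b^2 p$, which against the $\binom{n}{b}$ term $\approx b\log(n/b)$ is what pins $b \approx 2p^{-1}\log(np)$; the extra $p^{b-b/L} = p^{b(1-1/L)}$ factor only shifts things by $O(b/L \cdot \log(1/p)) = o(b\log(np))$ because $1/L \to 0$, so $\expn{Y} \to \infty$. Third, run the second-moment estimate exactly as in Section~\ref{sec:2ndmoment}: bound $\expn{Y^2}/\expn{Y}^2$ by summing over the overlap size $j$ between two copies, where the dominant contribution is $j=0$ (giving $1+o(1)$) and the overlapping terms are controlled because each shared vertex forces shared edges/non-edges and the path structure limits how copies can intersect; conclude $\prob{Y>0} \ge (1-o(1))\expn{Y}^2/\expn{Y^2} = \Omega(1) \ge \rho$. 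Fourth, apply the Lipschitz concentration inequality from Section~\ref{sec:concentration} to $X$ with Lipschitz constant $L$: since $\prob{X\ge b}\ge\rho$, the argument sketched in the introduction gives $b \le \expn{X}+t/2$ and then $\prob{X \le b-t} < \rho$ for $t = \eps p^{-1}\log(np)/2 \gg L\sqrt{n}$, so \textbf{whp} $X \ge b-t \ge (2-\eps)p^{-1}\log(np)$, which is the claimed order with component paths of length $L$.

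The main obstacle I expect is the second-moment computation with a \emph{growing} component size: one must verify that the overlap sum $\sum_{j\ge 1}$ is still $o(1)$ when two induced $P_L$-matchings share $j$ vertices, and the worrying regime is $j$ comparable to $L$ (two copies sharing an entire sub-path), where the combinatorial count of such configurations could be large. The saving grace is that sharing $j$ vertices within paths forces roughly $j - (\text{number of shared path-pieces})$ shared \emph{edges}, each of which replaces a factor $p(1-p)^{-1}$ by $1$, i.e.\ costs a factor $\approx p^{-1} = d^{-1}n$, while the number of ways to place the shared part is at most polynomially larger per shared vertex; since $p^{-1}$ beats any fixed polynomial in $n$ for the relevant range and $L \le \sqrt{d} \le \sqrt{np}$, these terms are geometrically dominated. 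Making this quantitative — choosing $d_0$ large enough that all the polynomial-in-$L$ and polynomial-in-$1/\eps$ slack is swallowed by the genuine exponential gap — is the only real work; everything else is bookkeeping already present in the proof of Theorem~\ref{thm:forests}.
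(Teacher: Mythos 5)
Your overall strategy (second moment for a lower bound on $\prob{X\ge b}$, then Lipschitz concentration to boost to \textbf{whp}) is indeed the paper's; but there are two issues, one cosmetic and one substantive.

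Cosmetic: you do not need to ``revisit the proof of Theorem~\ref{thm:forests} and track the dependence on $|T|$''. Lemma~\ref{lem:2ndmoment} was deliberately stated for \emph{any} forest $F$ on $k\le(2-\eps)\log_q d$ vertices with $\Delta(F)\le d^{\eps/6}$, and its probability bound $\exp(-10^4\Delta^2\,n\log^2 d/d^2 - 2d^{-\eps/7})$ depends only on $\Delta$, not on the component order $|T|$. For a linear forest $\Delta=2$, so the second-moment lower bound is available for free, regardless of how $L$ grows; the entire first-moment and overlap computation you sketch (and the worry about ``two copies sharing an entire sub-path'') is already handled there, uniformly in $L$.

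Substantive: the constraint on $L$ is \emph{not} ``$L=o(p^{-1}\log(np))$ so it gets absorbed'', and your choice $t=\eps p^{-1}\log(np)/2$ in the Talagrand step does not work. With the certifying function $f(s)\approx s$ and $b\approx 2\frac{n}{d}\log d$, Talagrand gives $\prob{X\le b-tL\sqrt{b+L}}\prob{X\ge b}\le\exp(-t^2/4)$, so $t$ must be chosen both large enough that $\exp(-t^2/4)$ beats the second-moment bound $\exp(-\Theta(n\log^2 d/d^2))$ (forcing $t\gtrsim\sqrt{n}\log d/d$), and small enough that the deviation $tL\sqrt{b+L}$ stays below $b$. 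With your $t$, the deviation is $tL\sqrt{b}=\Theta\bigl(n^{3/2}/(d\log^{2.5}d)\bigr)$, which for $d\le n^{1/2}\log^2 n$ is larger than $b=\Theta((n/d)\log d)$ by a factor $\Theta(n^{1/2}/\log^{3.5}d)\to\infty$, so the inequality is vacuous. The paper instead takes $t=\sqrt{n}\log^3 d/d$; balancing $t\gtrsim\sqrt{n}\log d/d$ against $tL\sqrt{b}\lesssim n/d$ is precisely what pins $L\lesssim d^{1/2}/\log^{O(1)}d$, i.e.\ produces the threshold $L=d^{1/2}/\log^4 d$ in the statement. This is the real content of the lemma and the reason the component length is $d^{1/2-o(1)}$ rather than $d^{1-o(1)}$; your proposal glosses over exactly the step that determines it.
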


It would be interesting to find out whether the length of the paths can be improved to $d^{1-o(1)}$. As remarked earlier, this would lead to the asymptotically optimal result for the longest induced path problem.

\subsection{Notation}

We use standard graph theoretical notation. In particular, for a graph $G$ and $U\In V(G)$, we let $e(G)$ denote the number of edges, $\Delta(G)$ the maximum degree and $G[U]$ the subgraph induced by~$U$.
Recall that a forest is called \defn{linear} if its components are paths. 

For functions $f(n),g(n)$, we write $f\sim g$ if $\lim_{n\to\infty}\frac{f(n)}{g(n)}=1$. We also use the standard Landau symbols $o(\cdot),\Omega(\cdot),\Theta(\cdot),O(\cdot),\omega(\cdot)$, where subscripts disclose the variable that tends to infinity if this is not clear from the context.
We use $\approx$ non-rigorously in informal discussions and ask the reader to interpret it correctly.

An event $\cE_n$ holds \defn{with high probability} (\textbf{whp}) if $\prob{\cE_n}= 1-o_n(1)$.
We use $\log$ to denote the natural logarithm with base~$\eul$. Moreover, $[n]=\Set{1,\dots,n}$ and $(n)_k=n(n-1)\cdots (n-k+1)$.
Recall the standard estimates $\binom{n}{k}\le \left(\frac{en}{k}\right)^k$, $1+x\le \eul^x$ and $\log(1+x)=x+O\left(\frac{x^2}{1-|x|}\right)$, where the latter holds for $|x|<1$ and implies $1-x\ge \eul^{-x-O(x^2)}$ for $0\le x\le 0.99$, say.
As customary, we tacitly treat large numbers like integers whenever this has no effect on the argument.

\section{Second moment}\label{sec:2ndmoment}

In this section, we use the second moment method to derive a lower bound on the probability that $G(n,d/n)$ contains a given induced linear forest of size $\sim 2\frac{n}{d}\log d$. Here, it does not matter that the components are small.
More precisely, we prove that for fixed $\eps>0$ and $d\ge d_0(\eps)$, \emph{any} bounded degree forest $F$ on $k\le (2-\eps)\frac{n}{d}\log d$ vertices is an induced subgraph of $G(n,d/n)$ with probability at least $\exp(-O(\frac{n\log^2 d}{d^2}))$.
Moreover, when $d=\omega(n^{1/2}\log n)$, the obtained probability bound tends to~$1$. In particular, in this regime, the lemma readily implies the existence of an induced path of the asymptotically optimal length $\sim 2\frac{n}{d}\log d$ \textbf{whp}.

\begin{lemma}\label{lem:2ndmoment}
	For any $\eps>0$, there exists $d_0$ such that the following holds for all $d_0\le d < n$, where $p=\frac{d}{n}$ and $q=\frac{1}{1-p}$. For any forest $F$ on $k\le (2-\eps)\log_q d$ vertices with maximum degree $\Delta\le d^{\eps/6}$, the probability that $G(n,p)$ contains an induced copy of $F$ is at least $$\exp\left(-10^4\Delta^2\frac{n\log^2 d}{d^2} -2d^{-\eps/7}\right) .$$
\end{lemma}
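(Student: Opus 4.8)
The plan is to apply the second moment method to the number $Y$ of induced labelled copies of $F$ in $G(n,p)$, and then use the Paley–Zygmund inequality $\prob{Y>0}\ge \frac{(\expn{Y})^2}{\expn{Y^2}}$. First I would fix an ordering of $V(F)=\{v_1,\dots,v_k\}$ and let $Y$ count injections $\phi\colon V(F)\to [n]$ such that $\phi$ maps edges of $F$ to edges of $G$ and non-edges of $F$ to non-edges of $G$. Writing $f=e(F)=k-c$ where $c$ is the number of components, we have $\expn{Y}=(n)_k\,p^{f}(1-p)^{\binom{k}{2}-f}$. Taking logarithms and using $k\le (2-\eps)\log_q d$ together with $p=d/n$, the routine estimate is $\expn{Y}\ge \exp(\Omega(\text{something positive}))$ — more precisely one checks $\log \expn{Y}\approx k\log n + f\log p - \binom{k}{2}p \approx k\log n - k\log d + k - \binom{k}{2}p$, and since $k\le (2-\eps)\frac{n}{d}\log d$ the term $\binom{k}{2}p\le \frac{k^2p}{2}\le (2-\eps)^2\frac{n\log^2 d}{2d}\cdot\frac{d}{n}\cdot\frac{1}{2}\cdot\ldots$ stays comfortably below $k(\log d - 1)\cdot(1+\Omega(\eps))$ wait — the point is that $\expn{Y}$ is (at least) a large positive power of $d$, certainly $\expn{Y}\ge 1$.

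The heart of the argument is bounding $\expn{Y^2}=\sum_{\phi,\psi}\prob{\phi,\psi\text{ both induced}}$. I would partition pairs $(\phi,\psi)$ according to the subgraph $H=F[\phi^{-1}(\Ima\phi\cap\Ima\psi)]$ of overlap, or more conveniently by the unordered set $S=\Ima\phi\cap\Ima\psi$ and the induced structure it carries in both copies. For a pair overlapping in a set of size $j$ (with $a$ edges inside the overlap common to both copies), the probability that both are induced is $p^{2f-a}(1-p)^{2(\binom{k}{2}-f) - (\binom{j}{2}-a)}$ roughly, and the number of such pairs is at most $(n)_k (n)_{k}/(n)_j \cdot (\text{combinatorial factor for the overlap pattern})$. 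Dividing by $(\expn{Y})^2 = (n)_k^2 p^{2f}(1-p)^{2(\binom{k}{2}-f)}$, the contribution of overlap size $j$ reduces to roughly $\frac{1}{(n)_j}\cdot (\text{choices of overlap graph}) \cdot p^{-a}(1-p)^{\binom{j}{2}-a}\le \frac{1}{(n)_j}\cdot(\text{choices})\cdot p^{-a}$. The crucial point — exactly as in~\cite{CDKS:ta} and originally Frieze — is that because $F$ has \emph{bounded degree} $\Delta\le d^{\eps/6}$, the overlap graph $H$ has at most $\Delta j/2$ edges, and the number of ways to embed a $j$-vertex bounded-degree graph into $F$ in two ways is crudely at most $k^{2j}\cdot(\text{something})$; meanwhile $p^{-a}\le p^{-\Delta j/2}=(n/d)^{\Delta j/2}$. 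So the $j$-term is at most $\frac{k^{2j}(n/d)^{\Delta j/2}}{(n)_j}\le \left(\frac{k^2 (n/d)^{\Delta/2}}{n - j}\right)^{j}$, and one has to show the sum over $j$ from $1$ to $k$ is $1 + \exp(O(\Delta^2 \frac{n\log^2 d}{d^2}) + O(d^{-\eps/7}))$.

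The main obstacle — and where the claimed error term $10^4\Delta^2 \frac{n\log^2 d}{d^2}$ comes from — is controlling this sum carefully for \emph{small} $j$, where $k^{2j}(n/d)^{\Delta j/2}/(n)_j$ need not be small (indeed for $j=1$ it is $\approx k^2/n \le 4\frac{n}{d^2}\log^2 d$, which is exactly the shape of the error). The strategy is: for $j=1,\dots,k^{o(1)}$ bound each term individually, noting that the exponent $p^{-a}$ only kicks in once the overlap actually contains an edge, so isolated-vertex overlaps (the dominant small-$j$ case) contribute $\sum_{j}\binom{k}{j}^2 j! / (n)_j \cdot (\ldots)$ — a geometric-type series summing to $\exp(O(k^2/n))=\exp(O(\Delta^0 \frac{n\log^2 d}{d^2}))$ up to the $\Delta^2$ slack absorbed by edge-overlaps; for larger $j$ one shows the terms decay geometrically because $k^2(n/d)^{\Delta/2} = o(n)$ given $k\le (2-\eps)\log_q d$ and $\Delta\le d^{\eps/6}$ (this is where the bound $d<n$ and the $\eps$-gap in $k$ are both essential — one needs $(n/d)^{\Delta/2}$ to be much smaller than $n/k^2$, i.e.\ $\Delta \log(n/d) \ll \log(n/k^2)$, which holds since $\Delta\le d^{\eps/6}$ forces this only in the regime $d$ polynomially smaller than $n$ — in the complementary dense regime $\binom{k}{2}p$ is tiny and a cruder bound suffices). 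Finally, Paley–Zygmund gives $\prob{Y>0}\ge (\expn{Y})^2/\expn{Y^2}\ge \exp(-10^4\Delta^2\frac{n\log^2 d}{d^2} - 2d^{-\eps/7})$, which is the claim; the additive $2d^{-\eps/7}$ is the slack needed to convert the $\log_q$ versus $\log$ discrepancy and the lower-order terms in $\log\expn{Y}$ into the stated clean bound.
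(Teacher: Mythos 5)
Your high-level plan — count labelled induced copies, apply Paley--Zygmund, and decompose $\expn{Y^2}/(\expn{Y})^2$ by the overlap of two copies — is the same as the paper's. However, the core counting step as you have set it up does not work, and the reason it fails is precisely the idea you are missing.

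You parametrize only by the overlap size $j$, and then bound uniformly: the overlap has at most $a\le \Delta j/2$ edges, and the number of overlap patterns is at most $k^{2j}$. This gives a $j$-th term of order $\frac{k^{2j}(n/d)^{\Delta j/2}}{(n)_j}$, and you observe (correctly) that for this to decay you would need $k^2(n/d)^{\Delta/2}=o(n)$. But in the main regime of interest, where $d$ is a large constant and $n\to\infty$, we have $k\approx 2\frac{n}{d}\log d=\Theta(n)$, so already $k^2/n\to\infty$, and multiplying by $(n/d)^{\Delta/2}$ only makes it worse. Your hedging remark about the regime ``$d$ polynomially smaller than $n$'' does not rescue the argument: the required inequality $\Delta\log(n/d)\ll\log(n/k^2)$ has a positive left-hand side and a right-hand side tending to $-\infty$ when $d$ is fixed. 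So the sum you write down genuinely diverges, and you have not identified a valid substitute bound.

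The fix, and the paper's key idea, is to parametrize the overlap by \emph{both} its size $s$ and its number of components $c$. Because $F$ is a forest, the overlap is a forest and has exactly $a=s-c$ edges; moreover, the number of compatible $\sigma$ whose overlap has $s$ vertices and $c$ components is at most $\binom{k}{c}k^c(6\Delta^2)^s(n-k)_{k-s}$ (Proposition~\ref{prop:counting extension}), \emph{not} $k^{2s}(n-k)_{k-s}$. The point you are missing is the anticorrelation: a more connected overlap (small $c$, large $a$) costs many factors of $p^{-1}$, but has dramatically fewer embeddings into $F$ — once a root of a component is placed, the rest of that component is determined up to $\Delta^{O(1)}$ choices per vertex. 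Your uniform bounds $a\le \Delta j/2$ and $k^{2j}$ both take the worst case of a quantity without noticing that the two worst cases cannot happen simultaneously. With the refined count, the sum over $c$ for fixed $s$ collapses to $\sum_c\binom{k}{c}(kp)^c\le\exp(O(k\log d))$-type factors, and the remaining sum over $s$ becomes $\sum_s\frac{\bigl(O(\Delta^2)\tfrac{k\log d}{d}q^{s/2}\bigr)^s}{s!}$, which is where the stated error term $\exp(O(\Delta^2 n\log^2 d/d^2))$ comes from.

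There is also a sign error worth flagging: after dividing by $(\expn{Y})^2$, the residual $(1-p)$-factor per pair with overlap $(j,a)$ is $(1-p)^{-(\binom{j}{2}-a)}=q^{\binom{j}{2}-a}\ge 1$, not $(1-p)^{\binom{j}{2}-a}\le 1$. You drop this factor as though it helps, but it is the term that grows like $q^{s^2/2}$ and is the reason the hypothesis $k\le(2-\eps)\log_q d$ enters at all (via $q^{s/2}\le d^{1-\eps/2}$); suppressing it conceals a genuine difficulty rather than overcoming one.
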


The proof of Lemma~\ref{lem:2ndmoment} is based on the second moment method and will be given below. We start off with some basic preparations which will also motivate the main counting tool.

Fix a forest~$F$ of order~$k$. Let $Y$ be the random variable which counts the number of \emph{labelled} induced copies of $F$ in $G(n,p)$. More formally, let $\cF$ be the set of all injections $\sigma\colon V(F)\to [n]$, and for $\sigma\in \cF$, let $F_\sigma$ be the graph with vertex set $\set{\sigma(x)}{x\in V(F)}$ and edge set $\set{\sigma(x)\sigma(y)}{xy\in E(F)}$.
Let $A_\sigma$ be the event that $F_\sigma$ is an induced subgraph of~$G(n,p)$.
Hence, $$\prob{A_\sigma}=p^{e(F)}(1-p)^{\binom{k}{2}-e(F)},$$
and setting $Y=\sum_{\sigma\in \cF}\IND(A_\sigma)$, we have 
\begin{align}
\expn{Y}=(n)_k p^{e(F)}(1-p)^{\binom{k}{2}-e(F)}.\label{expn}
\end{align}
Ultimately, we want to obtain a lower bound for $\prob{Y>0}$. 
Fix some $\sigma_0\in \cF$. By symmetry, the second moment of $Y$ can be written as $$\expn{Y^2}=\expn{Y} \sum_{\sigma\in \cF}\cprob{A_\sigma}{A_{\sigma_0}}.$$
Applying the Paley--Zygmund inequality, we thus have 
\begin{align}
\prob{Y>0}\ge \frac{\expn{Y}^2}{\expn{Y^2}} = \frac{\expn{Y}}{\sum_{\sigma\in \cF}\cprob{A_\sigma}{A_{\sigma_0}}}.\label{2ndmoment inequality}
\end{align}

The remaining difficulty is to control the terms $\cprob{A_\sigma}{A_{\sigma_0}}$.
We say that $\sigma\in \cF$ is \defn{compatible} (with $\sigma_0$) if $\cprob{A_\sigma}{A_{\sigma_0}}>0$. This means that, in the intersection $V(F_\sigma)\cap V(F_{\sigma_0})$, a pair $uv$ which is an edge in $F_\sigma$ cannot be a non-edge in $F_{\sigma_0}$, and vice versa, as otherwise $F_\sigma$ and $F_{\sigma_0}$ could not be induced subgraphs of $G(n,p)$ simultaneously. From now on, we can ignore all 
$\sigma$ that are not compatible with $\sigma_0$.

If $\sigma\in\cF$ is compatible with $\sigma_0$, we denote by $I_\sigma:=F_\sigma \cap F_{\sigma_0}$ the graph on $S=V(F_\sigma)\cap V(F_{\sigma_0})$ with edge set $E(F_\sigma[S])=E(F_{\sigma_0}[S])$. This ``intersection graph'' assumes a crucial role in the analysis.
Suppose that $I_\sigma$ has $s$ vertices and $c$ components. Since $I_\sigma$ is a forest, we have $e(I_\sigma)=s-c$. These are the edges of $F_\sigma$ that we already know to be there when conditioning on~$A_{\sigma_0}$, and for $F_\sigma$, we need $e(F)-e(I_\sigma)$ ``new'' edges. 
Moreover, there are $\binom{k}{2}-\binom{s}{2}-e(F)+e(I_\sigma)$ additional non-edges. 
Therefore,
\begin{align}
\cprob{A_\sigma}{A_{\sigma_0}} = p^{e(F)-s+c} (1-p)^{\binom{k}{2}-\binom{s}{2}-e(F)+s-c}.\label{cond prob}
\end{align}
Note here that when the number of components $c$ is large, then the exponent of $p$ is large and hence we have a stronger upper bound on $\cprob{A_\sigma}{A_{\sigma_0}}$. On the other hand, if $c$ is small, then $\cprob{A_\sigma}{A_{\sigma_0}}$ is larger, but this will be compensated by the fact that there are fewer such~$\sigma$.
In the following, we bound the number of compatible $\sigma\in\cF$ for which $I_\sigma$ has $s$ vertices and $c$ components. We remark that this kind of analysis was also carried out in~\cite{draganic:20} in the study of dense random graphs. We include the details for completeness, with an improved dependence on~$\Delta$.
We make use of the following elementary counting result.

\begin{prop}\label{prop:branching}
For a graph $H$ with $\Delta(H)\le \Delta$ and $v\in V(H)$, the number of (unlabelled) trees in $H$ of order $s$ which contain $v$ is at most $(e\Delta)^{s-1}$.
\end{prop}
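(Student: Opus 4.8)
The plan is to count these subtrees by comparison with an idealised infinite tree, where the count can be evaluated exactly. First I would fix an arbitrary linear order on $V(H)$; this induces on each neighbourhood $N(u)$ a linear order, and I call the rank of a neighbour its \emph{slot at $u$}, an integer in $\{1,\dots,\Delta\}$. Let $R$ denote the infinite rooted tree in which every vertex has exactly $\Delta$ children, the children of each vertex being labelled $1,\dots,\Delta$, and write $\rho$ for its root.

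The heart of the argument is an injection from the family of $s$-vertex subtrees of $H$ containing $v$ into the family of $s$-vertex subtrees of $R$ containing $\rho$. Given a subtree $T$ of $H$ with $v\in V(T)$, I would root $T$ at $v$ and define $\psi\colon V(T)\to V(R)$ by setting $\psi(v)=\rho$ and, recursively, taking $\psi(w)$ to be the $j$-th child of $\psi(u)$ whenever $w$ is the child of $u$ in $T$ occupying slot $j$ at $u$. Distinct children of $u$ in $T$ are distinct neighbours of $u$ and hence occupy distinct slots, so $\psi$ is injective and maps $T$ to a subtree of $R$ on $s$ vertices containing $\rho$. Conversely $T$ is reconstructed from $\psi(T)$ level by level — the slots used at $\psi(u)$ inside $\psi(T)$ are exactly the slots of the children of $u$ in $T$ — so $T\mapsto\psi(T)$ is injective.

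It then remains to count the $s$-vertex subtrees of $R$ containing $\rho$. Such a subtree is built by choosing, recursively at each retained vertex, an arbitrary subset of its $\Delta$ labelled children; hence if $a_s$ denotes their number, the generating function $F(x)=\sum_{s\ge1}a_s x^s$ satisfies $F=x(1+F)^\Delta$, and Lagrange inversion gives $a_s=\tfrac1s[y^{s-1}](1+y)^{\Delta s}=\tfrac1s\binom{\Delta s}{s-1}$. The elementary estimates $\binom{\Delta s}{s-1}\le (\Delta s)^{s-1}/(s-1)!$ and $s!\ge(s/\eul)^{s}$ then give $a_s\le\tfrac{\eul}{s}(\eul\Delta)^{s-1}$, which is at most $(\eul\Delta)^{s-1}$ for $s\ge3$; the cases $s\in\{1,2\}$ are immediate (one subtree for $s=1$, at most $\Delta$ for $s=2$). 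Since by the injection the quantity in the proposition is at most $a_s$, this finishes the proof.

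I do not expect a genuine obstacle: the argument is short and standard. The only points meriting a line of care are checking that the slot map $\psi$ is well defined and injective — which uses nothing beyond the fact that the parent and the children of a vertex of $T$ are pairwise distinct neighbours, so at most $\Delta$ slots are ever needed — and confirming that $F=x(1+F)^\Delta$ is indeed the functional equation governing the idealised count. If one prefers to avoid Lagrange inversion, the bound $a_s\le(\eul\Delta)^{s-1}$ can instead be obtained directly by induction on $s$ from the recursion $a_s=\sum_{c\ge0}\binom{\Delta}{c}\sum_{s_1+\dots+s_c=s-1}a_{s_1}\cdots a_{s_c}$.
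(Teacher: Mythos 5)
Your proof is correct and takes essentially the same route the paper indicates: it reduces the count to rooted subtrees of an idealised infinite tree, whose number it then evaluates exactly (via Lagrange inversion) and bounds. The paper merely cites that count, so your write-up simply supplies the details left implicit; the only cosmetic difference is that you compare to the $\Delta$-ary rather than the $\Delta$-regular infinite tree, which is slightly cruder but still yields $(e\Delta)^{s-1}$.
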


In the case that is relevant for our application, namely when $\Delta=2$, this is trivially true. The more general case follows easily from the formula for the number of rooted subtrees of a given order in the $\Delta$-regular infinite tree (see~\cite{KNP:20}).

\begin{prop}\label{prop:counting extension}
For all $0\le c\le s$, the number of compatible $\sigma\in\cF$ for which $I_\sigma$ has $s$ vertices and $c$ components is at most $$\binom{k}{c} k^c (6\Delta^2)^s (n-k)_{k-s}.$$
\end{prop}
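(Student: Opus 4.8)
The plan is to build a compatible $\sigma$ with a prescribed intersection graph $I_\sigma$ by first choosing the "combinatorial type" of $I_\sigma$ inside $F$, then choosing where the shared vertices go in $[n]$, and finally choosing where the remaining $k-s$ vertices of $F_\sigma$ go. The key observation is that $S = V(F_\sigma)\cap V(F_{\sigma_0})$ corresponds, via $\sigma_0^{-1}$, to a vertex subset $S_0 \In V(F)$, and via $\sigma^{-1}$ to a vertex subset $S_1 \In V(F)$, and that $I_\sigma$ is isomorphic to both $F[S_0]$ and $F[S_1]$ as graphs; moreover $F[S_0]$ and $F[S_1]$ are forests with $s$ vertices and $c$ components (since $F$ is a forest and compatibility forces the induced subgraphs to agree). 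So the task reduces to: (i) count the ways to pick such an $(S_0, S_1)$ together with an isomorphism $F[S_1] \to F[S_0]$; (ii) for each, count the ways to place $\sigma$.

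First I would bound the number of ways to choose $S_0 \In V(F)$ with $F[S_0]$ having exactly $c$ components. A forest with $c$ components that sits inside $F$ is determined by picking, for each component, a vertex of $F$ to "anchor" it and then a subtree of $F$ through that anchor; by Proposition~\ref{prop:branching}, the subtree of order $s_i$ through a fixed anchor numbers at most $(e\Delta)^{s_i - 1}$. Choosing the $c$ anchors costs $\binom{k}{c}$, and then splitting $s$ among the components and counting subtrees gives a factor at most $k^c (e\Delta)^{s-c}$ (the $k^c$ absorbing the number of ways to distribute the sizes $s_1 + \dots + s_c = s$ and the small slack). The same bound applies to $S_1$. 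Then the isomorphism $F[S_1] \to F[S_0]$: once the two forests are fixed, an isomorphism between them is determined component-by-component, and a rooted tree of order $s_i$ has at most... actually it is cleaner to count the isomorphism together with $S_1$ — i.e. count embeddings of the abstract forest $I_\sigma$ into $F$ — which again, anchoring each component at a vertex of $F$ and using Proposition~\ref{prop:branching} with the $\Delta$-bound on the branching, costs at most $\binom{k}{c} k^c (e\Delta)^{s-c}$ per side, combining to roughly $(e\Delta)^{2s}$ together with the stated $\binom{k}{c} k^c$ prefactor.

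Next, given $S_0$ (hence $\sigma_0(S_0) \In [n]$ is determined since $\sigma_0$ is fixed) and the isomorphism to $S_1$, placing $\sigma$ means: $\sigma$ restricted to $S_1$ is forced to equal $\sigma_0$ composed with the isomorphism (this is what it means for the images to intersect in exactly $S$ with the matching induced structure), and $\sigma$ on the remaining $k-s$ vertices of $F$ is an arbitrary injection into $[n] \sm \sigma_0(V(F))$ — there are $(n-k)_{k-s}$ such injections. That produces the factor $(n-k)_{k-s}$.

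Finally I would collect the constants: the two branching counts give $(e\Delta)^{s-c}\cdot(e\Delta)^{s-c} \le (e\Delta)^{2s}$, and then $(e\Delta)^2 = e^2\Delta^2 < 6\Delta^2$ (using $\Delta \ge 2$, or simply $e^2 < 8$ and absorbing), yielding $(6\Delta^2)^s$ as claimed, alongside $\binom{k}{c}k^c$ and $(n-k)_{k-s}$. The main obstacle — the place where one has to be careful rather than merely routine — is organising the two-sided count so that the isomorphism between $F[S_0]$ and $F[S_1]$ is not over- or under-counted: one must be sure that choosing "$S_0$ as a vertex set" and "an embedding of the abstract graph $I_\sigma$ into $F$ landing on $S_1$" together pin down $\sigma$ exactly, with the constant $6$ (rather than something larger) coming out only after noticing that the per-component anchor choices can be bundled into the $\binom{k}{c}k^c$ factor so that each vertex of $I_\sigma$ contributes at most an $e\Delta$ factor on each side.
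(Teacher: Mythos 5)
Your decomposition is sound and is in fact the same one the paper uses: choose $S_0\subseteq V(F)$ (equivalently the intersection graph $I_\sigma$ inside $F_{\sigma_0}$), then choose an embedding of $I_\sigma$ into $F$ (which records $S_1$ and the isomorphism at once, and pins down $\sigma$ on $S_1$), then place the remaining $k-s$ vertices. The factor $(n-k)_{k-s}$ is also correctly attributed. The problem is in the two intermediate counts, and it is not a matter of constants that ``come out in the wash''.

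First, you bound the number of embeddings of the fixed forest $I_\sigma$ into $F$ by $\binom{k}{c}k^c(e\Delta)^{s-c}$, invoking Proposition~\ref{prop:branching} again. That is the wrong tool here: the branching bound counts how many \emph{subtrees} pass through a vertex, which is what you need when the shape of the tree is unknown. But for the embedding step $I_\sigma$ is already a concrete forest, so you should count homomorphically: pick the images of the $c$ roots ($(k)_c\le k^c$ choices), then place the remaining $s-c$ vertices one at a time in an order where each has a previously placed neighbour, giving at most $\Delta$ choices each, i.e.\ $(k)_c\Delta^{s-c}$ in total. There is no $\binom{k}{c}$ term (the roots of $I_\sigma$ are already determined, only their images need choosing) and no factor of $e$.

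This matters for the final bound in two ways. Taking your per-side count $\binom{k}{c}k^c(e\Delta)^{s-c}$ literally, the product over the two sides is $\binom{k}{c}^2 k^{2c}(e\Delta)^{2(s-c)}$, not the $\binom{k}{c}k^c(e\Delta)^{2s}$ you write down; the extra $\binom{k}{c}k^c$ cannot simply be dropped, and since $c$ can be as large as $s$ this overshoots the claimed bound by far more than a constant. And even setting that aside, your closing estimate $(e\Delta)^2=e^2\Delta^2<6\Delta^2$ is false: $e^2\approx 7.39>6$, and neither ``$\Delta\ge 2$'' (not assumed in the proposition) nor ``$e^2<8$ and absorbing'' recovers $6\Delta^2$. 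The constant $6$ in the statement is only reachable because the factor $e$ appears \emph{once}, from applying Proposition~\ref{prop:branching} on the $S_0$ side alone; together with the $\binom{s-1}{c-1}\le 2^s$ size-distribution bound (rather than absorbing it into $k^c$, which you would then pay for again) and the greedy $\Delta^{s-c}$ for the embedding, one gets $\binom{k}{c}(2e\Delta)^s\cdot k^c\Delta^{s-c}\cdot(n-k)_{k-s}\le\binom{k}{c}k^c(2e\Delta^2)^s(n-k)_{k-s}$, and $2e<6$ closes it. So the fix is conceptually small --- replace the second branching count by the greedy embedding count and keep the $2^s$ where it belongs --- but as written your bookkeeping does not establish the proposition.
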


\begin{proof}
Fix $s$ and~$c$. We can obviously assume that $s\ge c\ge 1$, as otherwise the bound is easily seen to hold.
The first claim is that the number of subgraphs of $F_{\sigma_0}$ with $s$ vertices and $c$ components is at most $\binom{k}{c} (2e\Delta)^s$. 
To see this, we first choose root vertices $v_1,\dots,v_c$ for the components, for which there are at most $\binom{k}{c}$ choices. For $i\in [c]$, let $T_i$ denote the component which will contain~$v_i$. Next, we fix the sizes of the components. Writing $s_i=|T_i|$, the number of possibilities is given by the number of positive integer solutions of $s_1+\dots+s_c=s$, which is $\binom{s-1}{c-1}\le 2^s$ by a well-known formula. 
Now, having fixed the sizes, we can apply Proposition~\ref{prop:branching} for each $i\in[c]$, with $F,v_i$ playing the roles of $H,v$, to see that the number of choices for $T_i$ is at most $(e\Delta)^{s_i-1}$, which combined amounts to $(e\Delta)^{s-c}$.
This implies the claim, and immediately yields an upper bound on the number of possibilities for the intersection graph~$I_\sigma$.

Now, fix a choice of $I_\sigma$. Since $I_\sigma$ is a forest with $c$ components, its vertices can be ordered such that every vertex, except for the first $c$ vertices, has exactly one neighbour preceding it.
In order to count the number of possibilities for $\sigma$, we proceed as follows. 
First, choose the preimages under $\sigma$ for the first $c$ vertices, for which there are at most $(k)_c$ choices. Now, we choose the preimages of the remaining vertices of $I_\sigma$ one-by-one in increasing order. In each step, there are at most $\Delta$ choices, since one neighbour of the current vertex has already chosen its preimage, and $I_\sigma$ has to be an induced subgraph of $F_\sigma$. Hence, there are at most $\Delta^{s-c}$ choices for the preimages of the remaining vertices of~$I_\sigma$.
Finally, we have used $s$ vertices of $F$ as preimages for the vertices in $I_\sigma$. The remaining $k-s$ vertices of $F$ must be mapped to $[n]\sm V(F_{\sigma_0})$, so there are at most $(n-k)_{k-s}$ possibilities.
\end{proof}

With the preparations done, the proof of the lemma reduces to a chain of estimates.

\lateproof{Lemma~\ref{lem:2ndmoment}}
By~\eqref{2ndmoment inequality}, it suffices to show that
\begin{align*}
	\frac{\sum_{\sigma\in \cF}\cprob{A_\sigma}{A_{\sigma_0}}}{\expn{Y}} \le \exp\left(10^4\Delta^2\frac{n\log^2 d}{d^2} + 2d^{-\eps/7}\right).
\end{align*}

We split the sum over compatible $\sigma\in \cF$ according to the number of vertices and components of~$I_\sigma$. Applying \eqref{expn},~\eqref{cond prob} and Proposition~\ref{prop:counting extension}, we obtain
\begin{align*}
\frac{\sum_{\sigma\in \cF}\cprob{A_\sigma}{A_{\sigma_0}}}{\expn{Y}} &\le
\sum_{s=0}^k \sum_{c=0}^s \frac{\binom{k}{c}k^c (6\Delta^2)^s (n-k)_{k-s}  p^{e(F)-s+c} (1-p)^{\binom{k}{2}-\binom{s}{2}-e(F)}}{(n)_k p^{e(F)}(1-p)^{\binom{k}{2}-e(F)}} \\
&= \sum_{s=0}^k \frac{(n-k)_{k-s}}{(n)_k}p^{-s}q^{\binom{s}{2}}(6\Delta^2)^s  \sum_{c=0}^s \binom{k}{c} (kp)^c \\
&\le  \sum_{s=0}^k (4/n)^s p^{-s} q^{s^2/2} (6\Delta^2)^s \frac{(16k\log d)^s}{s!}\\
&= \sum_{s=0}^k \frac{\left(\frac{384\Delta^2 k \log d}{d} q^{s/2}  \right)^s}{s!}.
\end{align*}
To verify the last inequality, note that we always have $k\le 2\log_q(np)\le 2\frac{n}{d}\log d$ since $\log(q)\ge p$. Hence, $kp\le 2\log d$. Moreover, we have
$\frac{(n-k)_{k-s}}{(n)_k} \le \frac{1}{(n)_s} \le (4/n)^s$ since $s\le k \le 2n/\eul$. We also used the fact that $\binom{k}{c} \le 4^s \binom{k}{s}\le  \frac{(4k)^s}{s!}$. To see this, observe that when $s\le k/2$, we have $\binom{k}{c}\le \binom{k}{s}$, and otherwise, $\binom{k}{c}\le 2^k \le 2^{2s}$. Finally, $c$ takes only $s+1\le 2^s$ values.

We split the final sum into two terms.
First, consider the range $s\le k/\log d$. Then $q^{s/2}\le q^{1/\log q}= \eul$. Hence, recalling the power series $e^x=\sum_{s\ge 0}\frac{x^s}{s!}$, we obtain the bound
$$\sum_{s=0}^{\lfloor k/\log d\rfloor } \frac{\left(\frac{384\Delta^2 k \log d}{d} q^{s/2}  \right)^s}{s!}\le \exp\left(\frac{384\eul \Delta^2 k\log d}{d} \right) \le \exp\left(\frac{10^4 \Delta^2 n\log^2 d}{d^2} \right).$$ 

Finally, for $s\ge k/\log d$, we use $s!\ge (s/\eul)^s$ to bound each summand as
\begin{align*}
\frac{\left(\frac{384\Delta^2 k \log d}{d} q^{s/2}  \right)^s}{s!} \le \left(\frac{384\eul \Delta^2 k\log d}{ds} q^{s/2} \right)^s \le \left(\frac{384\eul \Delta^2 \log^2 d}{d}  q^{s/2} \right)^s.
\end{align*}
Crucially, since $s\le k\le (2-\eps)\log_q d$, we have $q^{s/2} \le q^{(1-\eps/2)\log_q d} = d^{1-\eps/2}$. Now, for sufficiently large $d\ge d_0$ the bracket is bounded by $\frac{384\eul \Delta^2 \log^2 d}{d^{\eps/2}} \le d^{-\eps/7}<1$. Therefore the geometric series tells us that 
$$\sum_{s=\lceil k/\log d \rceil}^{k} \frac{\left(\frac{384\Delta^2 k \log d}{d} q^{s/2}  \right)^s}{s!}  \le \frac{1}{1-d^{-\eps/7}} -1 \le 2d^{-\eps/7}.$$ 

Altogether, we conclude that
\begin{align*}
\frac{\sum_{\sigma\in \cF}\cprob{A_\sigma}{A_{\sigma_0}}}{\expn{Y}} \le  \exp\left(\frac{10^4 \Delta^2 n\log^2 d}{d^2} \right) + 2d^{-\eps/7} \le \exp\left(\frac{10^4 \Delta^2 n\log^2 d}{d^2} + 2d^{-\eps/7}\right),
\end{align*}
completing the proof.
\endproof

\section{Concentration}\label{sec:concentration}

In this section, we deduce Theorem~\ref{thm:forests} and Lemma~\ref{lem:forests} from Lemma~\ref{lem:2ndmoment}. 
We will use Talagrand's inequality.\COMMENT{One could also make the argument work with Azuma's inequality, but Talagrand's inequality is more convenient to use.}
To state it, we need the following definitions.
Given a product probability space $\Omega=\prod_{i=1}^n \Omega_i$ (endowed with the product measure) and a random variable $X\colon \Omega\to \bR$, we say that $X$ is
\begin{itemize}
	\item \defn{$L$-Lipschitz} (for some $L>0$) if for any $\omega,\omega'\in \Omega$ which differ only in one coordinate, we have $|X(\omega)-X(\omega')|\le L$;
	\item \defn{$f$-certifiable} (for a function $f\colon \bN\to \bN$) if for every $s$ and $\omega$ such that $X(\omega)\ge s$, there exists a set $I\In [n]$ of size $\le f(s)$ such that $X(\omega')\ge s$ for every $\omega'$ that agrees with $\omega$ on the coordinates indexed by~$I$.
\end{itemize}

\begin{theorem}[Talagrand's inequality, see~\cite{AS:08}]
	Suppose that $X$ is $L$-Lipschitz and $f$-certifiable. Then, for all $b,t\ge 0$,
	$$\prob{X\le b-tL\sqrt{f(b)}} \prob{X\ge b} \le \exp\left(-t^2/4\right).$$
\end{theorem}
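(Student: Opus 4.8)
The plan is to derive this combinatorial statement from Talagrand's \emph{convex distance inequality}, which carries the genuine probabilistic content and which I would prove by induction on the number~$n$ of coordinates. For $A\In\Omega$ and $x\in\Omega$, define $U_A(x)\In\{0,1\}^n$ to be the set of all $u$ for which there is a $y\in A$ with $u_i=1$ whenever $x_i\neq y_i$, and set $d_T(x,A)^2:=\min\{\,\|v\|_2^2 : v\in\mathrm{conv}(U_A(x))\,\}$. By linear-programming (minimax) duality this equals $\bigl(\sup_{\alpha\ge 0,\ \|\alpha\|_2\le 1}\ \inf_{y\in A}\sum_{i:x_i\neq y_i}\alpha_i\bigr)^2$; the first description is what the induction uses, the second (a supremum over weight vectors) is what the reduction to the stated form uses. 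The target of Step~1 is the inequality $\prob{A}\cdot\expn{\exp\!\bigl(d_T(\cdot,A)^2/4\bigr)}\le 1$ for every $A\In\Omega$; once this is established, Markov's inequality applied to $\exp(d_T(\cdot,A)^2/4)$ gives $\prob{A}\,\prob{d_T(\cdot,A)\ge t}\le e^{-t^2/4}$ for all $t\ge 0$.

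For the induction, write $\Omega=\bar\Omega\times\Omega_n$ and, for a point $(\bar x,\omega)$, consider the section $A_\omega=\{\bar y:(\bar y,\omega)\in A\}$ and the projection $B=\{\bar y:(\bar y,\omega')\in A\text{ for some }\omega'\}$. The geometric heart is the observation that for any $v\in\mathrm{conv}(U_{A_\omega}(\bar x))$, any $w\in\mathrm{conv}(U_B(\bar x))$ and any $\lambda\in[0,1]$, the concatenated vector $(\lambda v+(1-\lambda)w,\,1-\lambda)$ lies in $\mathrm{conv}(U_A(\bar x,\omega))$; taking infima and using convexity of $\|\cdot\|_2^2$ gives $d_T((\bar x,\omega),A)^2\le \lambda\,d_T(\bar x,A_\omega)^2+(1-\lambda)\,d_T(\bar x,B)^2+(1-\lambda)^2$. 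Exponentiating, applying H\"older's inequality with conjugate exponents $1/\lambda$ and $1/(1-\lambda)$, and invoking the inductive bounds $\expn{\exp(d_T(\bar x,A_\omega)^2/4)}\le 1/\prob{A_\omega}$ and $\expn{\exp(d_T(\bar x,B)^2/4)}\le 1/\prob{B}$ (expectations over $\bar\Omega$), one obtains, for each fixed $\omega$, a bound of the form $\prob{B}^{-1}e^{(1-\lambda)^2/4}q^{-\lambda}$ with $q=\prob{A_\omega}/\prob{B}\le 1$. Optimizing via the elementary one-variable estimate $\inf_{0\le\lambda\le 1}e^{(1-\lambda)^2/4}q^{-\lambda}\le 2-q$, then averaging over $\omega$ using $\mathbb{E}_\omega\bigl[\prob{A_\omega}\bigr]=\prob{A}$ together with the inequality $u(2-u)\le 1$, closes the induction.

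Step~2 is the short deterministic reduction. Fix $b,t\ge 0$ and put $A=\{\omega:X(\omega)\le b-tL\sqrt{f(b)}\}$. It suffices to show that every $\omega$ with $X(\omega)\ge b$ has $d_T(\omega,A)\ge t$, since then $\prob{X\ge b}\le\prob{d_T(\cdot,A)\ge t}$ and Step~1 finishes. Given such an $\omega$, apply $f$-certifiability with $s=b$ to get $I\In[n]$ with $|I|\le f(b)$ witnessing $X\ge b$, and take the weight vector $\alpha$ with $\alpha_i=|I|^{-1/2}$ for $i\in I$ and $\alpha_i=0$ otherwise, so $\|\alpha\|_2=1$. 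For any $y\in A$, let $z$ agree with $\omega$ on $I$ and with $y$ off $I$; then $X(z)\ge b$ by the choice of $I$, while $z$ differs from $y$ only on $J:=\{i\in I:\omega_i\neq y_i\}$, so $L$-Lipschitzness yields $b\le X(z)\le X(y)+L|J|\le b-tL\sqrt{f(b)}+L|J|$, i.e.\ $|J|\ge t\sqrt{f(b)}\ge t\sqrt{|I|}$. Hence $\sum_{i:\omega_i\neq y_i}\alpha_i\ge\sum_{i\in J}\alpha_i=|J|/\sqrt{|I|}\ge t$; taking the infimum over $y\in A$ shows $d_T(\omega,A)\ge t$, as needed.

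The main obstacle is the induction in Step~1: getting the convex-hull description of $d_T$ and the concatenation lemma $(\lambda v+(1-\lambda)w,1-\lambda)\in\mathrm{conv}(U_A(\bar x,\omega))$ exactly right, and pushing through the calculus estimate $\inf_{\lambda\in[0,1]}e^{(1-\lambda)^2/4}q^{-\lambda}\le 2-q$ (the naive choices $\lambda=0$ or $\lambda=1$ are not enough near $q\approx e^{-1/2}$, so one has to choose $\lambda$ as an explicit function of $q$). By contrast, Step~2 is elementary once the convex distance inequality is available.
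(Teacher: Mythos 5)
The paper does not prove this statement at all — it is quoted as a black box from the cited reference (Alon--Spencer), and your two-step argument (the convex distance inequality via induction on coordinates with the H\"older/calculus optimization, followed by the certifiability reduction with the weight vector supported on the certificate $I$) is precisely the standard proof given there. Your write-up is correct, including the key points you flag as delicate.
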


Our probability space is of course $G(n,p)$. Although this comes naturally as a product of $\binom{n}{2}$ elementary probability spaces $\Omega_{ij}$, one for each potential edge $ij$, it can be more effective, depending on the problem, to consider a description that is vertex-oriented, where the edges incident to a vertex are combined into one probability space.
Concretely, for $i\in [n-1]$, let $\Omega_i=\prod_{j>i}\Omega_{ij}$ represent all edges from vertex $i$ to vertices $j>i$.
Then $G(n,p)=\prod_{i=1}^{n-1}\Omega_i$. Note here that the vertices are ordered to describe the product space in a way that every edge appears exactly once. Apart from that, this ordering plays no role.

\lateproof{Theorem~\ref{thm:forests}}
Fix $\eps>0$, a tree~$T$, and assume $d_0$ is sufficiently large. Let $L=|T|$ and $d=np$.
A standard first moment computation shows that \textbf{whp} there is no induced $T$-matching of order at least $(2+\eps)\log_q (np)$. We include the short computation for the sake of completeness.
Let $r=(2+\eps)\log_q (np)/L$ and let $Z$ be the number of (unlabelled) induced $T$-matchings in $G(n,p)$ with $r$ components. 
Then we have 
\begin{align*}
\expn{Z} \le \frac{n^{rL}}{r!} p^{r(L-1)} (1-p)^{\binom{rL}{2}-r(L-1)} \le (np)^{rL}(rp/\eul)^{-r}q^{-(rL)^2/2 + 2rL}.
\end{align*}
Using $p\le 0.99$, we see $\log q=\Theta(p)$ and $q=O(1)$. In particular, $rp/\eul =\Omega\left(\frac{\log d_0}{L}\right)\ge 1$ and $rL=\Theta\left(\frac{\log (np)}{p}\right)=\omega_n(1)$. Hence, 
\begin{align*}
	\expn{Z} \le \left(O(1) npq^{-rL/2} \right)^{rL} \le \left(O(1) d_0^{-\eps/2} \right)^{rL} \le 2^{-rL} = o_n(1).
\end{align*}
By Markov's inequality, \textbf{whp} we have $Z=0$.

We now turn to the lower bound.
Let $X$ be the maximum order of an induced $T$-matching in $G(n,p)$.
Our goal is to show that $X\ge (2-\eps)\log_q d$ \textbf{whp}. Set $b=(2-\eps/2)\log_q d$. %(Since $\log_q(d)=\omega_n(1)$, we may assume that $b$ is divisible by~$L$.)

First, by Lemma~\ref{lem:2ndmoment}, we have 
\begin{align*}
\prob{X\ge b} \ge \exp\left(-10^4L^2\frac{n\log^2 d}{d^2} -2d^{-\Omega(\eps)}\right).
\end{align*}
This means that in the case $d\ge n^{1/2}\log^{2}n$, we are already done. Assume now that $d\le n^{1/2}\log^{2}n$. Then the above bound simplifies to 
\begin{align}
\prob{X\ge b} \ge \exp\left(-\frac{n\log^5 d}{d^2}\right).\label{2nd moment}
\end{align}
Recall also that in the regime $d=o(n)$ we have $\log_q d\sim \frac{n}{d}\log d$.

It is easy to check that $X$ is $L$-Lipschitz and $f$-certifiable, where $f(s)=s+L$. Indeed, adding or deleting edges arbitrarily at one vertex can change the value of $X$ by at most~$L$, hence $X$ is $L$-Lipschitz. Moreover, if $X\ge s$, this means there is a set $I\In[n]$ of size $s\le |I|< s+L$ which induces a $T$-matching. If we leave the coordinates indexed by $I$ unchanged, this means in particular that $I$ still induces a $T$-matching, hence we still have $X\ge s$.

Hence, Talagrand's inequality applied with $t=\frac{\sqrt{n}\log^3 d}{d}$ yields
$$\prob{X\le b-tL\sqrt{b+L}} \prob{X\ge b} \le \exp\left(-\frac{n\log^6 d}{4d^2}\right).$$
Together with~\eqref{2nd moment} and since 
\begin{align}
tL\sqrt{b+L}\le \frac{\sqrt{n}\log^3 d}{d} L \sqrt{2\frac{n}{d}\log d}\le \frac{n}{d},\label{Lipschitz}
\end{align}%(where the last inequality justifies our choice of $L$),
we infer that the probability of $X\le b-\frac{n}{d}$ is at most $\exp\left(-\frac{n\log^6 d}{5d^2}\right)=o_n(1)$.
This completes the proof since $b-\frac{n}{d}\ge (2-\eps)\log_q d$.
\endproof

In the above proof, we had some room to spare in~\eqref{Lipschitz}. We will now exploit this to allow the component sizes to grow with~$d$. The proof is almost verbatim the same, so we only point out the differences.

\lateproof{Lemma~\ref{lem:forests}}
Note that we are only interested in the case $d\le n^{1/2}\log^{2}n$ and when $T$ is a path of order~$L$. Since $\Delta(T)$ is bounded, Lemma~\ref{lem:2ndmoment} still provides the lower bound in~\eqref{2nd moment}.
All we have to ensure now is that~\eqref{Lipschitz} still holds, and this is easily seen to be the case as long as $L\le d^{1/2}/\log^4 d$.
\endproof

\section{Connecting}\label{sec:connect}

In this section, we use Lemma~\ref{lem:forests} to prove Theorem~\ref{thm:path} as outlined in Section~\ref{sec:paths}. Recall that we intend to define an auxiliary digraph on the components of a linear forest, where an edge corresponds to a suitable connection between two paths. 
Our goal is to find an almost spanning path in this random digraph. The tool which enables us to achieve this, Lemma~\ref{lem:DFS} below, is based on the well-known graph exploration process \emph{depth-first-search} (DFS).
The usefulness of DFS to find long paths in random graphs was demonstrated impressively by Krivelevich and Sudakov~\cite{KS:13} in a paper where they give surprisingly short and elegant proofs of classical results in random graph theory. For instance, a straightforward consequence of DFS is the following: If an $n$-vertex graph $G$ has the property that any two disjoint sets of size $k$ are joined by an edge, then $G$ contains a path of length $n-2k+1$. The condition needed here can be conveniently checked in random graphs.

In order to connect the paths of the linear forest, we will use some new vertices. For the final path to be induced, we require these new vertices to form an independent set. One potential and clean way to guarantee this is to first find an independent set and then to only use vertices from this set as connecting vertices. However, this reduces the number of potential connecting vertices by a factor of roughly $d$, which is too costly.
Instead, we develop a variant of DFS that can encode conflicts, and use it to find a sufficient condition for the existence of a long ``conflict-free'' path.

We now introduce the notation we need.
Given a set $E$ (which in our case will be the edge set of the auxiliary digraph $D$), a \defn{conflict system} on $E$ is a graph $C$ together with an assignment $\Lambda\colon E \to 2^{V(C)}$. We say that a subset $E'\In E$ is \defn{admissible} (with respect to $C,\Lambda$) if one can select a representative from $\Lambda(e)$ for all $e\in E'$ such that the chosen representatives are distinct and form an independent set in~$C$.
We say that an element $y\in V(C)$ has no conflict with $X\In V(C)$ if there is no edge in $C$ between $y$ and~$X$.

\begin{lemma}\label{lem:DFS}
	Let $G$ be a digraph on $n$ vertices and $C,\Lambda$ a conflict system on~$E(G)$. Suppose that, for any two disjoint sets $S,T\In V(G)$ of size $k$ and any subset $X\In V(C)$ of size at most~$n-1$, there exists an edge $e\in E(G)$ from $S$ to~$T$ and a representative in $\Lambda(e)\sm X$ which has no conflict with~$X$. Then $G$ contains an admissible path of length $n-2k+1$.
	%and an admissible cycle of length at least $n-4k+4$.
\end{lemma}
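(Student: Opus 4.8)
The proof should follow the Krivelevich–Sudakov DFS strategy, adapted to carry the conflict system along. I would run a DFS on $G$ that processes vertices in a fixed order, maintaining the usual partition of $V(G)$ into three sets: $U$, the vertices not yet touched; $A$, the active stack (the current path being built, in order from bottom to top); and $D$, the vertices already fully explored and removed. But I would run a \emph{constrained} version: when DFS is at the top vertex $v$ of the stack $A$ and looks for an out-neighbour in $U$, it does not merely need an edge $v\to w$ with $w\in U$; it needs an edge $e = v\to w$ together with a choice of representative $\lambda_e \in \Lambda(e)$ that is (i) distinct from all representatives already committed to edges currently on the stack, and (ii) has no conflict in $C$ with that set of committed representatives. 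If such an $(e,w,\lambda_e)$ exists, we push $w$ onto the stack and record $\lambda_e$; otherwise we pop $v$ off the stack into $D$. Thus at every moment the representatives attached to the edges of the current stack-path are distinct and independent in $C$, so the stack always induces an \emph{admissible} path, and likewise any path completed in the course of the algorithm is admissible.

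**Key steps.** First, set up this constrained DFS precisely and observe the invariant just described: the committed representatives along the stack form, at all times, an independent set of size equal to the current path length, hence at most $n-1$ vertices of $C$. Second — the heart of the argument — show that the stack reaches size $n-2k+1$ at some point. As in the standard DFS proof, consider the moment the algorithm halts (when $U=\emptyset$ and $A=\emptyset$). Track the evolution of $|U|$ and $|A|$: each step either moves one vertex from $U$ to $A$ (a push) or one from $A$ to $D$ (a pop), and there are exactly $n$ pushes and $n$ pops over the whole run. Now suppose for contradiction that $|A| \le n-2k$ throughout. Pick the moment $m$ when $|D|=|U|$; at that moment $|U| = |D| \ge k$ (since $|A|\le n-2k$), so $|U|\ge k$ and $|D|\ge k$. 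Take $T\subseteq U$ of size $k$ and $S\subseteq D$ of size $k$ (these are disjoint). Let $X\subseteq V(C)$ be the set of representatives currently committed to the stack edges; $|X| = |A| \le n-2k \le n-1$. By hypothesis there is an edge $e$ from $S$ to $T$ in $G$ and a representative in $\Lambda(e)\setminus X$ with no conflict with $X$. Third, derive the contradiction: the tail of $e$ lies in $D$, meaning it was popped; at the moment it was popped, every vertex now in $U$ was already in $U$ (vertices only leave $U$, never return), so in particular the head $w\in T\subseteq U$ was in $U$ then; and the stack at the pop-time was a \emph{sub-path} of the stack configuration, so the committed representatives then were a subset of... — here I need to be slightly careful, since the stack at the pop time is not literally a subset of $X$. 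The cleaner phrasing: at the pop time, the constrained DFS failed to find \emph{any} admissible out-edge from the tail of $e$ into $U$; but $e$ itself goes into $U$, and the hypothesis guarantees a representative avoiding/unconflicted-with the committed set \emph{at that time}, which has size $\le n-1$ — contradiction with the pop. So the stack must reach size $\ge n-2k+1$, giving an admissible path of that length.

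**Main obstacle.** The delicate point is the last step: making the contradiction airtight requires that the hypothesis be applied with $X$ equal to the committed-representative set at the \emph{pop time} of the tail of $e$, not at time $m$. One must check that this set also has size $\le n-1$ (it does — it is at most the stack size at any time, which is $\le n-1$ trivially since the stack never exceeds $n$ and reaching $n$ would already give the desired path), and that $S,T$ can still be chosen appropriately — i.e. that the relevant vertex of $S$ is in $D$ at time $m$ hence was popped earlier, and the relevant vertex of $T$ is in $U$ at that earlier pop time. This "transfer across time" bookkeeping, standard in DFS arguments but complicated here by the extra conflict layer, is where care is needed; everything else is routine. A clean way to organize it is to phrase the failure condition at a pop directly: "vertex $v$ is popped only if there is no edge $v\to w$ with $w$ currently in $U$ admitting a representative outside and unconflicted with the current committed set," and then apply the lemma's hypothesis with $S=\{v\}$-extended-to-size-$k$ inside $U\cup\{v\}$... — but since the hypothesis needs $|S|=|T|=k$, one instead argues globally at time $m$ as above and invokes monotonicity of $U$. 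I would present it in the global form, spelling out the monotonicity of $U$ carefully.
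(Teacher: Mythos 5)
Your DFS framework matches the paper's, but there is a genuine gap coming from how you define the committed set $X$. You take $X$ to be the representatives attached to \emph{edges currently on the stack}, a set that shrinks whenever a vertex is popped. As you yourself notice, this breaks the final contradiction: at the moment $m$ when $|D|=|U|$, you invoke the hypothesis with $X_m$ and obtain an edge $e=(u,v)$ and a representative $y\notin X_m$ unconflicted with $X_m$; but $u$ was popped at some earlier time $t<m$, and the stack at time $t$ is not a sub-path of the stack at time $m$ (they agree only below their last common ancestor, and differ above it). Hence $X_t\not\subseteq X_m$ in general, and you cannot conclude that $y\notin X_t$ nor that $y$ is unconflicted with $X_t$, so nothing contradicts the pop of $u$. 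Your suggested repair --- apply the hypothesis directly with $X=X_t$ at pop time --- does not work either: the hypothesis only produces \emph{some} edge from a chosen $S$ to a chosen $T$, not one with tail exactly $u$, and moreover at time $t$ the dead set may still have fewer than $k$ vertices. Monotonicity of $U$ handles the head $v$, but you also need monotonicity of $X$, which your definition does not give you.

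The paper's version avoids this by making $X$ cumulative: it is the set of \emph{all} representatives ever chosen, with nothing removed on a pop. Then $X_t\subseteq X_m$ for $t\le m$, so ``$y\notin X_m$'' and ``$y$ unconflicted with $X_m$'' automatically transfer to time $t$, and together with $v$ being unvisited at time $t$ this contradicts the pop of $u$. This over-constraint does not hurt admissibility: every newly chosen representative is required to be distinct from and unconflicted with the whole of $X$, hence in particular with the representatives of the edges currently on the stack, so the stack path remains admissible throughout. Finally, $|X|\le n-1$ still holds at time $m$, since each addition to $X$ is paired with the permanent removal of one vertex from the unvisited set, giving $|X|\le n-|U_m|\le n-k\le n-1$. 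That single change --- cumulative rather than stack-local $X$ --- is exactly the missing piece in your write-up.
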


\begin{proof}
	We proceed in a depth-first-search manner. As usual, we maintain three sets of vertices: the set $S$ of vertices whose exploration is complete, the set $T$ of unvisited vertices, and the set $U=V(G)\sm(S\cup T)$ which functions as a stack (last in, first out). Additionally, we keep track of the set $X\In V(C)$ of chosen representatives. Initially, we have $S=U=X=\emptyset$ and $T=V(G)$. 
	
	In each round, the algorithm proceeds as follows. If the stack $U$ is empty, then some vertex is removed from $T$ and pushed into~$U$. If the stack $U$ is non-empty, consider the last vertex $u$ that was inserted into~$U$. If there exist $v\in T$ such that $uv\in E(G)$ and $y\in \Lambda(uv)\sm X$ such that $y$ has no conflict with $X$, then delete (one such) $v$ from $T$ and insert it into $U$, and add $y$ to~$X$. Otherwise, delete $u$ from $U$ and add it to~$S$.
	The algorithm terminates when $U=T=\emptyset$ and $S=V(G)$.
	
	Clearly, in each round, exactly one vertex ``moves'', either from $T$ to $U$ or from $U$ to~$S$. 
	Hence, there exists a moment when $|S|=|T|$. Moreover, the vertices in $U$ always form an admissible directed path by construction. 
	Suppose now, for the sake of contradiction, that $G$ does not contain an admissible path of the desired length. Then $|U|\le n-2k+1$ and hence $|S|=|T|\ge k$. By assumption of the lemma, there exist $u\in S$ and $v\in T$ such that $uv\in E(G)$ and $y\in \Lambda(uv)\sm X$ such that $y$ has no conflict with~$X$. 
	However, this contradicts the fact that $u$ was moved from $U$ to $S$ at some point, since instead the algorithm would have moved $v$ from $T$ to $U$ and added $y$ to~$X$.
	%(Note here that $X$ collects all chosen representatives for vertices that were moved from $T$ to $U$, not just of those that are currently in~$U$.)
%	This completes the proof of the path case. Using the assumption of the lemma again, we can find a suitable edge from the last $k$ vertices to the first $k$ vertices of an admissible path, which results in an admissible cycle of the desired length.
\end{proof}

Now, we use Lemma~\ref{lem:DFS} to connect the components of an induced linear forest obtained via Lemma~\ref{lem:forests}.

\lateproof{Theorem~\ref{thm:path}}
Fix $\eps>0$ and assume that $d\ge d_0$ is sufficiently large.
We will assume that $d\le n^{1/2}\log^2 n$. For the case $d=\omega(n^{1/2}\log n)$, Lemma~\ref{lem:2ndmoment} implies that \textbf{whp} there exists an induced path even of the asymptotically optimal length $(2-\eps)\frac{n}{d}\log d$.

	Split the vertex set $[n]$ into $V_1$ and $V_2$ each of size at least $n/3$. We explore the random edges of $G\sim G(n,p)$ in two stages. First, we expose the edges inside~$V_1$. Here, we find an induced linear forest $F$ with large components. In the second round, we expose the remaining edges. Our goal is to use some vertices from $V_2$ to connect almost all of the components of $F$ into a large induced path. 
	Set $$L=d^{1/2}/\log^5 d, \quad m = \eps L/8, \quad k=(3/2-\eps/4)\frac{n}{d}\log d, \quad N=k/L.$$
	
	Expose first the edges inside~$V_1$. By Lemma~\ref{lem:forests}, \textbf{whp} we can find in $G[V_1]$ an induced linear forest $F$ with $N$ components of order~$L$.\footnote{We could get an even larger forest from Lemma~\ref{lem:forests}, but this would not help us here since the bottleneck is to ensure that the new connecting vertices only have two edges to the given forest.} Note here that the edge probability $p$ in the application is the same, and $\log(np/3)\sim \log(d)$. 
	
	From now on, we assume that any such $F$ is given. It suffices to prove that, when exposing the edges between $V_1,V_2$ and inside $V_2$, \textbf{whp} we can find the desired induced path.
	
	Let $\cP$ be the set of components of~$F$. We give every path $P\in \cP$ an arbitrary direction, which will be fixed for the rest of the proof. 
	Let $P^-$ denote the first $m$ vertices on $P$, and $P^+$ the last $m$ vertices on~$P$, according to the chosen direction.
	
	We define a (random) auxiliary digraph $D$ with vertex set~$\cP$, where an edge $(P_1,P_2)$ represents a suitable connection between $P_1^+$ and~$P_2^-$.
	Formally, for distinct $P_1,P_2\in \cP$ and $a\in V_2$, we say that $(P_1,P_2)$ is \defn{$a$-connected} if $a$ has exactly one edge to both $P_1^+$ and $P_2^-$, but no edge to any vertex in $V(F)\sm (P_1^+\cup P_2^-)$. The pair $(P_1,P_2)$ forms an edge in $D$ if it is $a$-connected for some $a\in V_2$.
	
	In order to facilitate the application of Lemma~\ref{lem:DFS}, we need to specify a suitable conflict system on~$E(D)$.
	The conflict graph is simply the random graph $G[V_2]$, and to an edge $(P_1,P_2)$ of $D$, we assign the set of all $a\in V_2$ for which $(P_1,P_2)$ is $a$-connected.
	Clearly, an admissible path in $D$ yields an induced path in~$G$.
	To complete the proof, it suffices to show that \textbf{whp} $D$ contains an admissible path of length $(1-\eps/4)N$, as then the induced path in $G$ has length at least $(1-\eps/4)N(L-2m)=(1-\eps/4)(1-\eps/4)NL\ge (3/2-\eps)\frac{n}{d}\log d$, as desired.

	To achieve this, we show that the conditions of Lemma~\ref{lem:DFS} hold \textbf{whp}.
	Fix any disjoint sets $S,T\In \cP$ of size $\eps N/8$ and any $X\In V_2$ of size at most~$N-1$. Ultimately, we want to use a union bound, so we note that the number of choices for these sets are at most $2^N$ for each of $S$ and $T$, and at most $$\sum_{j=0}^{N-1}\binom{n}{j} \le N\binom{n}{N}\le N \left(\frac{\eul n}{N} \right)^N \le \exp(4N\log d)$$ for $X$, where we used $N\le n/2$ in the first and $N\ge n/d^2$ in the last inequality. 
	
	Call $a\in V_2\sm X$ \defn{good} if some pair in $S\times T$ is $a$-connected and $a$ has no conflict with~$X$.
	Observe that $S,T,X$ satisfy the condition of Lemma~\ref{lem:DFS} if and only if some $a\in V_2\sm X$ is good.
	For $(P_1,P_2)\in S\times T$ and $a\in V_2\sm X$, the probability that $(P_1,P_2)$ is $a$-connected and $a$ has no conflict with $X$ is exactly $$\alpha=m^2p^2(1-p)^{|F|-2+|X|}$$ as there are $m^2$ choices for the two neighbours of $a$ in $P_1^+$ and $P_2^-$, and in any such case we need $|F|-2+|X|$ ``non-edges''.
    Since $|F|+|X| \le (3/2-\eps/5) \frac{n}{d}\log d$ and $1-p\ge \exp(-p-O(p^2))$, we have $$(1-p)^{|F|-2+|X|}\ge \exp(-(1+O(p))p(3/2-\eps/5)p^{-1}\log d )) \ge d^{-3/2+\eps/6}.$$ 
    Moreover, two distinct pairs $(P_1,P_2)$, $(P_1',P_2')$ cannot both be $a$-connected at the same time, thus the probability that $a$ is good 
    is simply $|S||T|\alpha$. 
    Finally, whether $a$ is good is determined solely by the potential edges between $a$ and $V(F)\cup X$. Therefore, these events are independent for distinct $a$'s. 
    Hence, the probability that $S,T,X$ violate the condition of Lemma~\ref{lem:DFS}, that is, no $a$ is good, is at most 
    $$(1-|S||T|\alpha)^{|V_2\sm X|} \le \exp(-|S||T|\alpha|V_2\sm X|) \le \exp(-(\eps N/8)^2 \alpha(n/4) ) \le \exp(-Nd^{\eps/7})  ,$$
    where the last inequality holds since
    $$\frac{\eps^2}{256}N\alpha n \ge \frac{n}{dL} L^2 (d/n)^2 d^{-3/2+\eps/6}n = Ld^{-1/2+\eps/6}\ge d^{\eps/7}.$$
	The said union bound completes the proof.
\endproof

\section{Concluding remarks}

\begin{itemize}

\item We proved that the random graph $G(n,d/n)$ \textbf{whp} contains an induced path of length $(3/2-o_d(1))\frac{n}{d}\log d$. It would be very nice to improve the constant $3/2$ to $2$, which would be optimal. One possible way to achieve this, using parts of our argument, is to show that there exists an induced linear forest of size $\sim 2\frac{n}{d}\log d$ where each component path has length $d^{1-o(1)}$.
%This in turn might be achievable using a similar combination of second moment method and concentration inequalities.

\item Our proof is not constructive, since the first part of the argument uses the second moment method.
The previously best bound $\sim \frac{n}{d}\log d$ due to \L{}uczak~\cite{luczak:93} and Suen~\cite{suen:92} was obtained via certain natural algorithms.
It seems that this could be a barrier for such approaches. A (rather unsophisticated) heuristic giving evidence is that when we have grown an induced tree of this size, and assume the edges outside are still random, then the expected number of vertices which could be attached to a given vertex of the tree is less than one.
Moreover, such an ``algorithmic gap'' has been discovered for many other natural problems. 
In particular, Coja-Oghlan and Efthymiou~\cite{COE:15} proved that the space of independent sets of size $k$ becomes ``shattered'' when $k$ passes $\sim \frac{n}{d}\log d$, which seems to cause local search algorithms to get stuck.

\item 
In~\cite{CDKS:ta} it is conjectured that one should not only be able to find an induced path of size $\sim 2\frac{n}{d}\log d$, but any given bounded degree tree. For dense graphs, when $d=\omega(n^{1/2}\log n)$, this follows from the second moment method (see~\cite{draganic:20}). In fact, Lemma~\ref{lem:2ndmoment} shows that the maximum degree can even be a small polynomial. On the contrary, the sparse case seems to be more difficult, mainly because the vanilla second moment method does not work.
However, Dani and Moore~\cite{DM:11} demonstrated that one can actually make the second moment method work, at least for independent sets, by considering a \defn{weighted} version. This even gives a more precise result than the classical one due to Frieze~\cite{frieze:90}. It would be interesting to find out whether this method can be adapted to induced trees.

\end{itemize}

\section*{Acknowledgement}

Thanks to Benny Sudakov and Nemanja Dragani\'c for very useful discussions.

\bibliographystyle{amsplain_v2.0customized}
\bibliography{References}

\end{document}